\documentclass[ejs,twoside,preprint]{imsart}
\usepackage[OT1]{fontenc}
\usepackage{amsthm,amsmath,amsfonts,amssymb}
\usepackage[all,cmtip]{xy}
\arxiv{math.ST/0904.1980}
\usepackage{graphicx} 
\usepackage[colorlinks]{hyperref}

\doi{10.1214/11-EJS640}
\pubyear{2011}
\volume{5}
\issue{0}
\firstpage{1276}
\lastpage{1312}

\startlocaldefs
\def\@pnumwidth{22pt}

\theoremstyle{plain}
\newtheorem{thm}{Theorem}[section]
\newtheorem{lem}[thm]{Lemma}
\newtheorem{prop}[thm]{Proposition}
\newtheorem{cor}[thm]{Corollary}

\theoremstyle{definition}
\newtheorem{defn}[thm]{Definition}

\newtheorem{exmp}[thm]{Example}

\theoremstyle{remark}
\newtheorem{rem}[thm]{Remark}

\newcommand{\indep}{{\;\bot\!\!\!\!\!\!\bot\;}}

\endlocaldefs

\begin{document}
\begin{frontmatter}
\title{Implicit inequality constraints in a binary tree model}
\runtitle{Geometry of the binary models on trees}

\begin{aug}
\author{\fnms{Piotr} \snm{Zwiernik}\corref{}\ead[label=e1]{piotr.zwiernik@gmail.com}}

\address{Institute for Pure and Applied Mathematics,\\
460 Portola Plaza,\\ Box 957121,\\ Los Angeles, CA 90095-7121\\
\printead{e1}}
\end{aug}

\medskip
\textbf{\and}
\begin{aug}
\author{\fnms{Jim Q.} \snm{Smith}\ead[label=e3]{j.q.smith@warwick.ac.uk}}
\address{University of Warwick\\Department of Statistics\\CV7AL,
Coventry, UK\\
\printead{e3}}

\runauthor{P. Zwiernik and J.Q. Smith}

\affiliation{University of Warwick}
\end{aug}

\begin{abstract}
In this paper we investigate the geometry of a discrete Bayesian
network whose graph is a tree all of whose variables are binary and the
only observed variables are those labeling its leaves. We provide the
full geometric description of these models which is given by a set of
polynomial equations together with a set of complementary implied
inequalities induced by the positivity of probabilities on hidden
variables. The phylogenetic invariants given by the equations can be
useful in the construction of simple diagnostic tests. However, in this
paper we point out the importance of also incorporating the associated
inequalities into any statistical analysis. The full characterization
of these inequality constraints derived in this paper helps us
determine how and why routine statistical methods can break down for this
model class.
\end{abstract}

\begin{keyword}[class=AMS]
\kwd[Primary ]{62H05}\kwd{62E15}
\kwd[; secondary ]{60K99}\kwd{62F99}
\end{keyword}

\begin{keyword}
\kwd{Graphical models on trees}
\kwd{binary data}
\kwd{tree cumulants}
\kwd{semialgebraic statistical models}
\kwd{phylogenetic invariants}
\kwd{inequality constraints}
\end{keyword}
\received{\smonth{10} \syear{2010}}

\tableofcontents

\end{frontmatter}

\section{Introduction}\label{sec:introduction}

A Bayesian network whose graph is a tree all of whose inner nodes
represent variables which are not directly observed defines an
important class of models containing both phylogenetic tree models and
hidden Markov models. Inference for this model class tends to be
challenging and often needs to employ fragile numerical algorithms. In
\cite{pwz-2010-identifiability} we established a useful new coordinate
system to analyze such models when all of the variables are binary.
This reparametrization enabled us not only to address various
identifiability issues but also helped us to derive exact formulae for
the maximum likelihood estimators given that the sample proportions
were in this model class.

However, as well as making identifiability issues more transparent and
open to systematic analysis, this new coordinate system can be also
used to analyze the global structure of tree models. In particular, it
enables us to obtain the full description of these models in terms of
implicit polynomial equations and inequalities. Knowing this full
semi-algebraic description is extremely useful when used in conjunction
with the identifiability structure as discussed in
\cite{pwz-2010-identifiability}. We explain in Section
\ref{sec:inference} how this study impacts the stability of the maximum
likelihood and Bayesian estimation procedures within the class of
phylogenetic tree models. It is also helpful in the construction of
tree diagnostics and model selection procedures within this class.

This paper builds on the results in \cite{chor2000mml} where some
partial understanding of the analytic approach to the maximum
likelihood estimation was presented. The problem here is that routinely
fitted phylogenetic models often violate the inequality constraints
defining the model. One effect of this phenomenon is then that the
maximum likelihood estimators (MLEs) usually lie on to the boundaries
of the parameter space (see Section \ref{sec:inference} for an
example). In a full Bayesian analysis it will make the ensuing
inference about probabilities highly sensitive to the settings of prior
distributions on the parameters (see \cite{smith2010robustness,smith2008isoseparation}).
This, in turn, automatically interferes
with the appropriate functioning of model selection algorithms. For
example Bayes Factor scores will be highly influenced again by priors.
On the other hand more classical methods like for example AIC or BIC
algorithms, when used routinely, misbehave because many of the MLEs
will lie on the boundary of the feasible region since usual dimension
counting penalties are implicitly too large (see \cite{pwz-2010-bic}).
For these and other reasons explained in more detail in Section
\ref{sec:inference}, the inequality conditions are of considerable
practical importance.\looseness=-1

This paper is part of an explosion of work which apply techniques in
algebraic geometry to study and develop statistical methodologies. The
particular geometric study of tree models was first introduced by Lake
\cite{lake1987rit}, and Cavender and Felsenstein
\cite{cavender1987ips}. This research was initially focused on so
called phylogenetic invariants. These are algebraic relationships
expressed as a set of polynomial equations over the observed
probability tables which must hold for a given phylogenetic model to be
valid. We note that these algebraic techniques have also been embraced
by computational algebraic geometers
\cite{allman2008pia,eriksson2004pag,sturmfels2005tip}
enhancing statistical and computational analysis of such models
\cite{casanellas2007pni} (see also \cite{allman_invariants2007} and
references therein).\looseness=-1

The main technical deficiency of using phylogenetic invariants alone
in this way is that they do not give a \textit{full} geometric
description of the statistical model. However, the additional
inequalities obtained as the main result of this paper complete this
description. Where and how these inequality constraints can helpfully
supplement an analysis based on phylogenetic invariants is illustrated
by the simple example given below.
\begin{exmp}\label{ex:1}
Let $T$ be the tripod tree in Figure \ref{fig:tripod} where we use the
convention that observed nodes are depicted by black nodes.
%
\begin{figure}
\includegraphics{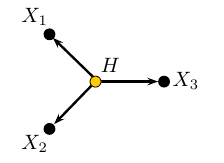}
\caption{The graphical representation of the tripod tree model.}\label
{fig:tripod}
\end{figure}
The inner node represents a binary hidden variable $H$ and the leaves
represent binary observable variables $X_1, X_2, X_3$. The model is
given by all probability distributions $p_{\alpha}$ for
$\alpha\in\{0,1\}^{3}$ such that
\[
p_{\alpha}=\theta^{(H)}_{0}\prod_{i=1}^{3}\theta^{(i)}_{\alpha
_{i}|0}+\theta^{(H)}_{1}\prod_{i=1}^{3}\theta^{(i)}_{\alpha_{i}|1},
\]
where $\theta^{(H)}_{i}=\mathbb{P}(H=i)$ for $i=0,1$ and
$\theta^{{(i)}}_{j|k}=\mathbb{P}(X_{i}=j|H=k)$ for $i=1,2,3$ and
$j,k=0,1$. The model has full dimension over the space of observed
marginal distributions $(X_{1},X_{2},X_{3})$ and consequently there are
no non-trivial equalities defining it. However, it is not a saturated
model since not all the marginal probability distributions over the
observed vector $(X_{1},X_{2},X_{3})$ lie in the model class. For
example Lazarsfeld and Henry \cite[Section 3.1]{lazarsfeld1968lsa}
showed that the second order moments of the observed distribution must
satisfy
\[
{\rm Cov}(X_1,X_2){\rm Cov}(X_1,X_3){\rm Cov}(X_2,X_3)\geq
0.
\]
Together with many other constraints we derive later, this
constraint, which clearly impacts the inferences we might want to make
(see Section \ref{sec:inference}), is not acknowledged through the
study of phylogenetic invariants. Therefore inference based solely on
these invariants is incomplete. For example naive estimates derived
through these methods can be infeasible within the model class in a
sense illustrated later in this paper.
\end{exmp}

This example and the discussion of some inferential issues discussed
above motivated the closer investigation of the semi-algebraic features
associated with the geometry of binary tree models with hidden inner
nodes. The main problem with the geometric analysis of these models is
that, in general, it is hard to obtain all the inequality constraints
defining a model explicitly even for very simple examples (see
\cite[Section 4.3]{drton2007asm}, \cite[Section 7]{garcia2005agb}).
Despite this, some results can be found in the literature. A binary
naive Bayes model was studied by Auvray et al. \cite{auvray2006sad}.
There are also some partial results for general tree structures on
binary variables given by Pearl and Tarsi \cite{pearl_tarsi86} and
Steel and Faller \cite{steel2009mls}. The most important applications
in biology involve variables that can take four values. Recently Matsen
\cite{matsen2007fti} gave a set of inequalities in this case for
group-based phylogenetic models (additional symmetries are assumed)
using the Fourier transformation of the raw probabilities. Here we
provide a simpler and more statistically transparent way to express the
constrained space.

The semialgebraic description we obtain here also has an elegant
mathematical structure. For example \cite{cavender1997443} gave an
intriguing correspondence between, on the one hand, a correlation
system on tree models and on the other distances induced by trees where
the length between two nodes in a tree is given as a sum of the length
of edges in the path joining them. The new coordinate system for tree
models that we introduced in \cite{pwz-2010-identifiability} enables us
to explore in detail this relationship between probabilistic tree
models (also called the tree decomposable distributions in
\cite{pearl_tarsi86}) and tree metrics and extend these results.

It has been known for some time that the constraints on possible
distances between any two leaves in the tree imply some additional
inequality constraints on the possible covariances between the binary
variables represented by the leaves. These inequalities, given in
(\ref{eq:suffic-ineq2}), follow from the four-point condition
(\cite{semple2003pol}, Definition 7.1.5) together with some other
simple non-negativity constraints. By using our new parametrization we
are able to show in this paper that these two types of inequality
constraints cannot be sufficient to describe the model class. Thus any
probability distribution in the model class must satisfy many other
additional constraints involving higher order moments. Using our
methods we are able to provide the full set of the defining constraints
in Theorem \ref{th:parameters0}. This is given by a list of polynomial
equations and inequalities which describe the set of all probability
distributions in the model.

The paper is organized as follows. In Section \ref{sec:models_trees} we
briefly introduce general Markov models. We then proceed to describe a
convenient new change of coordinates for these models given in
\cite{pwz-2010-identifiability}. In the new coordinate system the
parametrization of the model has an elegant product form. We use this
to obtain the full semi-algebraic description of a simple naive Bayes
model. In Section \ref{sec:inference} we discuss various ways in which
an awareness of these implicit inequalities can enrich a statistical
analysis of this model class. In Section \ref{sec:metrics} we state our
main theorem and illustrate how it can be used. In Section
\ref{sec:quartet} we discuss these results for a simple quartet tree
model.

\section{Tree models and tree cumulants}\label{sec:models_trees}\label
{sec:correlations}

We begin by defining and reviewing a new coordinate system for tree
models and demonstrate how it can be used to provide a better
understanding of this model class. We list the main results from our
previous paper \cite{pwz-2010-identifiability} and link it to the
results presented in the next sections.

Parametrizations based on moments are one way of providing a structured
model a structure more amenable to an algebraic analysis (see
\cite{beerenwinkel2007,drton2008binary}). This approach has
proved particularly effective in the presence of hidden data (see
\cite{settimi2000gma}) since then the analysis of a particular marginal
distributions over a subset of the observed variables can be specified
as a function of the joint moments containing that subset only. On the
other hand when a model class is defined by a set of conditional
independences further insight may be provided by reparametrizing to
other \textit{functions} of these moments to elegantly represent this
additional underlying structure. These functions typically resemble
cumulants.

One useful property of standard cumulants is that joint cumulants
always vanish whenever the random vector under analysis can be split
into two independent subvectors. Here we exploit analogous property
using a reparametrization customized to the topology of a particular
tree. These tree cumulants are introduced in
\cite{pwz-2010-identifiability}. They vanish only if some of the edges
in the defining tree model are missing. This corresponds to the
marginal independence of the leaves of two connected components of the
induced forest. The property follows from a more general result in
\cite[Proposition 4.3]{pwz-2010-cumulants} and partly explains the
elegant product-like structure of the resulting parametrization in
Proposition \ref{prop:monomial}.

In this paper we assume that random variables are binary taking values
either $0$ or $1$. We consider models with \textit{hidden} variables,
i.e. variables whose values are never directly observed. The vector $Y$
has as its components all variables in the graphical model, both those
that are observed and those that are hidden. The subvector of $Y$ of
observed variables is denoted by $X$ and the subvector of {hidden}
variables by $H$. A (\textit{directed}) \textit{tree} $T=(V,E)$, where
$V$ is the set of vertices (or nodes) and $E\subseteq V\times V$ is the
set of edges of $T$, is a connected (\textit{directed}) graph with no
cycles. A \textit{rooted tree} is a directed tree that has one
distinguished vertex called the \textit{root}, denoted by the letter
$r$, and all the edges are directed away from $r$. A rooted tree is
usually denoted by $T^{r}$. For each $v\in V$ by ${\rm pa}(v)$ we
denote the node preceding $v$ in $T^{r}$. In particular ${\rm
pa}(r)=\emptyset$. A vertex of $T$ of degree one is called a
\textit{leaf}. A vertex of $T$ that is not a leaf is called an
\textit{inner node}.

Let $T$ denote an undirected tree with $n$ leaves and let $T^{r}=(V,E)$
denote $T$ rooted in $r\in V$. A Markov process on a rooted tree $T^r$
is a sequence $\{Y_v:\,v\in V\}$ of random variables such that for each
$\alpha=(\alpha_v)_{v\in V}\in\{0,1\}^{V}$ its joint distribution
satisfies
%
\begin{equation}\label{eq:p_albar}
p_\alpha(\theta)=\theta^{(r)}_{\alpha_{r}}\prod_{v\in V\setminus r}
\theta^{(v)}_{\alpha_{v}|\alpha_{{\rm pa}(v)}},
\end{equation}
where $\theta^{(r)}_{\alpha_{r}}=\mathbb{P}(Y_{r}=\alpha_{r})$ and
$\theta^{(v)}_{\alpha_{v}|\alpha_{{\rm
pa}(v)}}=\mathbb{P}(Y_v={\alpha}_v|Y_{{\rm pa}(v)}=\alpha_{{\rm
pa}(v)})$. Since $\theta^{(r)}_{0}+\theta^{(r)}_{1}=1$ and
$\theta^{(v)}_{0|i}+\theta^{(v)}_{1|i}=1$ for all $v\in
V\setminus\{r\}$ and $i=0,1$ then the set of parameters consists of
exactly $2|E|+1$ free parameters: we have two parameters:
$\theta^{(v)}_{1|0}$, $\theta^{(v)}_{1|1}$ for each edge $(u,v)\in E$
and one parameter $\theta^{(r)}_{1}$ for the root. We denote the
parameter space by $\Theta_{T}=[0,1]^{2|E|+1}$ and the Markov process
on $T^{r}$ by $\widetilde{\mathcal{M}}_{T}$.
\begin{rem}\label{rem:rootings}
The reason to omit the root $r$ in the notation is that this model does
not depend on the rooting and is equivalent to the undirected graphical
model given by global Markov properties on $T$. To prove this note that
$T^{r}$ is a perfect directed graph and hence by \cite[Proposition
3.28]{lauritzen:96} parametrization in (\ref{eq:p_albar}) is
equivalent to factorization with respect to $T$. Since $T$ is
decomposable, by \cite[Proposition 3.19]{lauritzen:96}, this
factorization is equivalent to the global Markov properties.
\end{rem}

Let $\Delta_{2^n-1}=\{p\in\mathbb{R}^{2^n}: \sum_\beta p_\beta=1,
p_\beta\geq0\}$ with indices $\beta$ ranging over $\{0,1\}^n$ be the
probability simplex of all possible distributions of $X=(X_1,\ldots,
X_n)$ represented by the leaves of $T$. We assume now that all the
inner nodes represent hidden variables. Equation (\ref{eq:p_albar})
induces a polynomial map $f_T:\Theta_{T}\rightarrow\Delta_{2^n-1}$
obtained by marginalization over all the inner nodes of $T$
%
\begin{equation}\label{eq:p_albar2}
p_{\beta}(\theta)=\sum_\mathcal{H} \theta^{(r)}_{\alpha_{r}}\prod_{v\in
V\setminus r}\theta^{(v)}_{\alpha_{v}|\alpha_{{\rm pa}(v)}},
\end{equation}
where $\mathcal{H}$ is the set of all $\alpha\in\{0,1\}^{V}$ such that
the restriction to the leaves of $T$ is equal to $\beta$. We let
$\mathcal{M}_{T}=f_{T}(\Theta_{T})$ denote the \textit{general Markov
model} over the set of observable random variables (c.f. \cite[Section
8.3]{semple2003pol}).

A \textit{semialgebraic set} in $\mathbb{R}^{d}$ is a finite union of
sets given by a finite number of polynomial equations and inequalities.
Since $\Theta_{T}$ is a {semialgebraic set} and $f_{T}$ is a polynomial
map then by \cite[Proposition 2.2.7]{bochnak1998rag} $\mathcal{M}_{T}$
is a semialgebraic set as well. Moreover, if $f$ is a polynomial
isomorphism from $\Delta_{2^{n}-1}$ to another space then
$f(\mathcal{M}_{T})$ is also a semialgebraic set. The semialgebraic
description of $f(\mathcal{M}_{T})$ in $f(\Delta_{2^{n}-1})$ gives the
semialgebraic description of $\mathcal{M}_{T}$.

The idea behind tree cumulants was to define a polynomial isomorphism
from $\Delta_{2^{n}-1}$ to the space of new coordinates
$\mathcal{K}_{T}$. We defined a partially ordered set (poset) of all
the partitions of the set of leaves induced by removing edges of the
given tree $T$. Then tree cumulants are given as a function of
probabilities induced by a M\"{o}bius function on the poset. The
details of this change of coordinates are given in Appendix
\ref{app:change} and are illustrated below.

The tree cumulants are given by $2^{n}-1$ coordinates: $n$ means
${\lambda}_{i}=\mathbb{E} X_{i}$ for all $i=1,\ldots,n$ and a set of
real-valued parameters $\{\kappa_{I}:\,I\subseteq[n]\mbox{ where }
|I|\geq2\}$. Each formula for $\kappa_{I}$ is expressed as a function
of the higher order central moments of the observed variables. These
formulae are given explicitly in equation (\ref{eq:kappa-in-rho}) of
Appendix \ref{app:change}. Since the change of coordinates is a
polynomial isomorphism then, by \cite[Proposition
2.2.7]{bochnak1998rag}, the image of $\mathcal{M}_{T}$ in the space of
tree cumulants, denoted by $\mathcal{M}_{T}^{\kappa}$, is a
semialgebraic set. In this paper we provide the full semialgebraic
description of $\mathcal{M}_{T}^{\kappa}$, that is the complete set of
polynomial equations and inequalities involving the tree cumulants
which describes $\mathcal{M}_{T}^{\kappa}$ as the subset of
$\mathcal{K}_{T}$, for subsequent use in a statistical analysis of the
model class.

\begin{exmp}\label{ex:quartet1}
Consider the quartet tree model, i.e. the general Markov model given by
the graph in Figure \ref{fig:quartet}.
%
\begin{figure}
\includegraphics{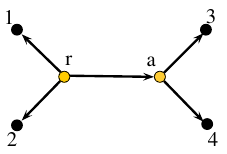}
\caption{A quartet tree}\label{fig:quartet}
\end{figure}
The tree cumulants are given by $15$ coordinates: ${\lambda}_{i}$ for
$i=1,2,3,4$ and $\kappa_{I}$ for $I\subseteq[4]$ such that $|I|\geq
2$. Denoting $U_{i}=X_{i}-\mathbb{E} X_{i}$ we have
$\kappa_{ij}=\mathbb{E} U_{i}U_{j}={\rm Cov}(X_{i},X_{j})$ for $1\leq
i<j\leq4$ and
\[
\kappa_{ijk}=\mathbb{E} \left(U_{i}U_{j}U_{k}\right)
\]
for all $1\leq i<j<k\leq4$ which we note is a third order central
moment. However, in general tree cumulants of higher order cannot be
equated with their corresponding central moments but only expressed as
functions of them. These functions are obtained by performing an
appropriate M\"{o}bius inversion. Thus for example from equation
(\ref{eq:kappa-in-rho}) in Appendix \ref{app:change} we have that
\[
\kappa_{1234}=\mathbb{E} \left(U_{1}U_{2}U_{3}U_{4}\right)-\mathbb{E}
\left(U_{1}U_{2}\right)\mathbb{E} \left(U_{3}U_{4}\right).
\]
Note that since the observed higher order central moments can be
expressed as functions of probabilities, tree cumulants can also be
expressed as functions of these probabilities.
\end{exmp}

Let $X_{\hat{i}}=(X_{1},X_{2},X_{3},X_{4})\setminus\{X_{i}\}$ for
$i=1,2,3,4$. From \cite[Proposition 4.3]{pwz-2010-cumulants} it follows
in particular that, like for the joint cumulant, $\kappa_{1234}=0$
whenever $X_{i}\indep X_{\hat{i}}$ for any $i=1,2,3,4$ or
$(X_{1},X_{2})\indep(X_{3},X_{4})$. However, in general,
$\kappa_{1234}\neq0$ for example if $(X_{1},X_{3})\indep
(X_{2},X_{4})$ and hence tree cumulants differ from classical
cumulants. Vanishing of the tree cumulants corresponds to an edge being
missing in the particular defining tree. This generalizes for other
trees and gives a heuristic explanation for the nice product-like
parametrization presented in Proposition \ref{prop:monomial} below. We
explain this now formally.

Let $T^{r}=(V,E)$ and let $\Omega_{T}$ denote the set of parameters
with coordinates given by $\bar{\mu}_{v}$ for $v\in V$ and $\eta_{u,v}$
for $(u,v)\in E$. Define a reparametrization map $f_{\theta\omega}:
\Theta_{T}\rightarrow\Omega_{T}$ as follows:
%
\begin{equation}\label{eq:uij}
\begin{array}{ll}
\eta_{u,v}=\theta^{(v)}_{1|1}-\theta^{(v)}_{1|0} & \mbox{for every
$(u,v)\in E$ and}\\
\bar{\mu}_v=1-2\lambda_v & \mbox{for each } v\in V,
\end{array}
\end{equation}
where $\lambda_{v}=\mathbb{E} Y_{v}$ is a polynomial in the original
parameters $\theta$. To see this let $r,v_{1},\ldots, v_{k},v$ be a
directed path in $T$. Then
%
\begin{equation}\label{eq:means-from-prob}
\lambda_{v}=\mathbb{P}(Y_{v}=1)=\sum_{\alpha\in\{0,1\}^{k+1}} \theta
^{(v)}_{1|\alpha_{k}}\theta^{(v_{k})}_{\alpha_{k}|\alpha_{k-1}}\cdots
\theta^{(r)}_{\alpha_{r}}.
\end{equation}
It can be easily checked that if ${\rm Var}(Y_{u})>0$ then
$\eta_{u,v}={\rm Cov}(Y_{u},Y_{v})/{\rm Var}(Y_{u})$. Hence
$\eta_{u,v}$ is just the regression coefficient of $Y_{v}$ with respect
to $Y_{u}$.

The parameter space $\Omega_{T}$ is given by the following constraints:
%
\begin{equation}\label{eq:constraints}
\begin{array}{l}
-1\leq\bar{\mu}_{r}\leq1, \qquad\mbox{and for each } (u,v)\in E\\
-(1+\bar{\mu}_{v}) \leq(1-\bar{\mu}_{u})\eta_{u,v} \leq(1-\bar{\mu
}_{v})\\
-(1-\bar{\mu}_{v}) \leq(1+\bar{\mu}_{u})\eta_{u,v} \leq(1+\bar{\mu}_{v}).
\end{array}
\end{equation}

In Appendix \ref{app:change} we show that there is a polynomial
isomorphism between $\Delta_{2^{n}-1}$ and the space of tree cumulants
$\mathcal{K}_{T}$ giving the following diagram, where the dashed arrow
denotes the induced parametrization.
%
\begin{equation}\label{eq:diagram}
\xymatrixcolsep{4pc}\xymatrix{
\Theta_{T} \ar@<1ex>[d]^{f_{\theta\omega}} \ar[r]^{f_{T}} &\Delta
_{2^{n}-1}\ar@<1ex>[d]^{f_{p\kappa}}\\
\Omega_{T}\ar@<1ex>[u]^{f_{\omega\theta}} \ar@{-->}[r]^{\psi_{T}}
&\mathcal{K}_{T}\ar@<1ex>[u]^{f_{\kappa p}}}
\end{equation}
One motivation behind this change of coordinates is that the induced
parametrization $\psi_{T}:\Omega_{T}\rightarrow\mathcal{K}_{T}$ has a
particularly elegant form.

\begin{prop}[\cite{pwz-2010-identifiability}, Proposition 4.1]
\label{prop:monomial} Let $T$ be an undirected tree with $n$ leaves.
Assume that $T$ is trivalent which here means that all of its inner
nodes have degree at most three. Let $T^{r}=(V,E)$ be $T$ rooted in
$r\in V$. Then $\mathcal{M}_T^\kappa$ is parametrized by the map
$\psi_{T}:\Omega_{T}\rightarrow\mathcal{K}_{T}$ given as
$\lambda_{i}=\frac{1}{2}(1-\bar{\mu}_{i})$ for $i=1,\ldots, n$ and
%
\begin{equation}\label{eq:kappa_def_general}
\kappa_{I}=\frac{1}{4}\left(1-\bar{\mu}_{r(I)}^{2}\right) \prod_{v\in
{\rm int}(V(I))} \bar{\mu}_{v}^{\deg(v)-2}\prod_{(u,v)\in E(I)} \eta
_{u,v}\quad\mbox{ for } I\subseteq[n], |I|\geq2
\end{equation}
where the degree is taken in $T(I)=(V(I),E(I))$; ${\rm int}(V(I))$
denotes the set of inner nodes of $T(I)$ and $r(I)$ denotes the root of
$T^{r}(I)$.
\end{prop}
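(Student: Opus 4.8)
The plan is to evaluate the composite $\psi_{T}=f_{p\kappa}\circ f_{T}\circ f_{\omega\theta}$ from diagram (\ref{eq:diagram}) in two steps: first express the central moments of the leaf variables as functions of the new parameters $(\bar{\mu}_{v},\eta_{u,v})$, and then feed these into the M\"obius inversion defining the tree cumulants in (\ref{eq:kappa-in-rho}). Writing $U_{v}=Y_{v}-\lambda_{v}$, formula (\ref{eq:uij}) gives at once $\lambda_{i}=\frac12(1-\bar{\mu}_{i})$, the first assertion. The two elementary facts driving everything are the binary reduction rule
$$
U_{v}^{2}=\bar{\mu}_{v}\,U_{v}+{\rm Var}(Y_{v}),\qquad {\rm Var}(Y_{v})=\lambda_{v}(1-\lambda_{v})=\tfrac14(1-\bar{\mu}_{v}^{2}),
$$
which follows from $Y_{v}^{2}=Y_{v}$, and the conditional regression identity $\E[U_{v}\mid Y_{u}]=\eta_{u,v}U_{u}$ for $(u,v)\in E$, valid together with the Markov property of $\widetilde{\cM}_{T}$.

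For $I\subseteq[n]$ with $|I|\geq2$, let $T(I)=(V(I),E(I))$ be the subtree of $T$ spanned by $I$, rooted at $r(I)$, write $\rho_{I}=\E\prod_{i\in I}U_{i}$, and let $m_{B}$ denote the right-hand side of (\ref{eq:kappa_def_general}) attached to a block $B\subseteq[n]$. Since every element of $I$ is a leaf of $T$, it is a leaf of $T(I)$, so the nodes in ${\rm int}(V(I))$ are hidden. The core of the proof is the moment identity
$$
\rho_{I}\;=\;\sum_{\pi}\ \prod_{B\in\pi}m_{B},
$$
where $\pi$ runs over the partitions of $I$ obtained by deleting edges of $T$ --- exactly the partitions occurring in (\ref{eq:kappa-in-rho}) --- and the coarsest partition $\pi=\{I\}$ contributes the single term $m_{I}$.

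I would prove this identity by induction on $|V(I)|$, propagating conditional expectations up $T(I)$. For a node $c$ of $T(I)$, with $I_{c}$ the set of leaves of $I$ weakly below $c$, the quantity $\E[\prod_{i\in I_{c}}U_{i}\mid Y_{c}]$ is a function of the binary variable $Y_{c}$, hence affine in $U_{c}$: say $A_{c}+B_{c}U_{c}$ with $A_{c}=\rho_{I_{c}}$. At an internal node $v$ with children $c_{1},\dots,c_{d}$ the Markov property makes the subtrees conditionally independent given $Y_{v}$, so $A_{v}+B_{v}U_{v}=\prod_{j=1}^{d}(A_{c_{j}}+B_{c_{j}}\eta_{v,c_{j}}U_{v})$, which is then collapsed back to affine form via the reduction rule above. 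The degree hypothesis enters here decisively: it forces $d\leq2$ at non-root nodes and $d\leq3$ at $r(I)$, so no power $U_{v}^{\geq4}$ ever appears, $\deg(v)-2\in\{0,1\}$ at every internal node, and --- term by term --- the expansion realizes exactly the two moves that generate the edge-deletion partitions: \emph{closing off a block} via a factor ${\rm Var}(Y_{v})=\frac14(1-\bar{\mu}_{v}^{2})$ (after which $v$ has degree at most two in that block's spanned tree and contributes no $\bar{\mu}_{v}$), or \emph{propagating past} $v$ with one $\eta_{v,c_{j}}$ per live child and one extra factor $\bar{\mu}_{v}^{\deg(v)-2}$. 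Taking the final expectation at $r(I)$ and using $\E U_{r(I)}=0$ leaves precisely the root factor $\frac14(1-\bar{\mu}_{r(I)}^{2})$, multiplied by $\bar{\mu}_{r(I)}^{\deg(r(I))-2}$ when $r(I)$ is itself internal in $T(I)$. Independence of each $m_{B}$ from the choice of root --- consistent with Remark \ref{rem:rootings} --- is verified along the way using $\eta_{u,v}{\rm Var}(Y_{u})=\eta_{v,u}{\rm Var}(Y_{v})={\rm Cov}(Y_{u},Y_{v})$.

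Once the moment identity is in hand the proof concludes formally: by (\ref{eq:kappa-in-rho}) the tree cumulants satisfy the inverse relation $\rho_{I}=\sum_{\pi}\prod_{B\in\pi}\kappa_{B}$ over the same family of partitions, so induction on $|I|$ --- every proper block $B\subsetneq I$ already satisfies $\kappa_{B}=m_{B}$, so the two expansions of $\rho_{I}$ differ only in the $\pi=\{I\}$ term --- yields $\kappa_{I}=m_{I}$, which is (\ref{eq:kappa_def_general}). The genuine obstacle is the bookkeeping inside the inductive step: tracking the pair $(A_{c},B_{c})$ as signed sums over partitions and checking that the recursion produces precisely the edge-deletion partitions of $I$ with the stated node exponents and exactly one root factor per block. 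Given the degree bound this is a finite, if slightly intricate, case analysis at each node, after which everything else is routine.
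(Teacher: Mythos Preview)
The present paper does not prove this proposition; it is imported verbatim as Proposition~4.1 of the companion paper \cite{pwz2010-identifiability} and used here as a black box, so there is no in-paper argument to compare your attempt against.

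That said, your approach is correct and is in the spirit of how such monomial parametrizations are typically derived. The two identities you isolate --- the binary reduction $U_v^{2}=\bar\mu_v U_v+\tfrac14(1-\bar\mu_v^{2})$ and the regression $\E[U_v\mid Y_u]=\eta_{u,v}U_u$ for $(u,v)\in E$ --- are exactly what makes the bottom-up recursion close: at a non-root inner node of $T(I)$ you multiply at most two affine expressions in $U_v$, at the root at most three, so one application of the reduction rule returns you to an affine expression and no power $U_v^{\geq 4}$ ever appears. Matching the resulting expansion of $\mu_I$ against the inverse M\"obius relation (\ref{eq:muinkappa}) and peeling off the top term by induction on $|I|$ is then a clean way to finish.

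One point worth making explicit before that final comparison: in your moment identity $\rho_I=\sum_{\pi}\prod_{B\in\pi}m_B$ you need to adopt the convention $m_{\{i\}}=0$ for singleton blocks, matching $\kappa_i=\mu_i=0$ on the other side. Without this, partitions of $I$ containing singletons would contribute spurious terms on one side but not the other, and the induction on $|I|$ would not line up with (\ref{eq:muinkappa}). With that convention fixed, the ``bookkeeping'' you flag --- checking that the recursion on $(A_c,B_c)$ produces exactly the edge-deletion partitions, with one factor $\tfrac14(1-\bar\mu_{r(B)}^{2})$ per block and the correct exponent $\deg(v)-2$ at each inner node --- is a finite case analysis at nodes of degree two and three, after which the proof is complete.
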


Proposition \ref{prop:monomial} has been formulated for trivalent
trees. However, it can be easily extended to the general case as
explained in \cite[Section 4]{pwz-2010-identifiability}.

This result enabled us to completely understand identifiability of tree
models extending results in \cite{chang1996frm}. In particular \cite
[Theorem 5.4]{pwz-2010-identifiability} identifies the cases when the
model is identified up to label switching. This condition is rather
technical and here we usually would recommend the use of the sufficient
condition that all the covariances between the leaves are nonzero.
Further results focus on the geometry of the unidentified space in the
case when the identifiability fails. More importantly, \cite[Corollary
5.5]{pwz-2010-identifiability} gives us formulae for parameters given a
probability distribution in the case when identifiability holds. This
result gives us a closed-form formulae for MLEs in certain special
cases (see Corollary \ref{cor:MLEs}).

To illustrate our technique we next obtain the full semialgebraic
description of the tripod tree model. This result is not new (see e.g.
\cite{auvray2006sad,settimi1998gbg} and a special case given by
\cite[Theorem 3.1]{pearl1986fusion}). However, this allows us not only
to unify notation but also to introduce the strategy we use to prove
the general case. We begin with a definition.
\begin{defn}\label{def:hyperdet}
Let $A$ be a $2\times2\times2$ table. The hyperdeterminant of $A$ as
defined by Gelfand, Kapranov, Zelevinsky \cite[Chapter
14]{gelfand1994dra} is given by
\begin{eqnarray*}
{\rm Det} \,
A&=&(a_{000}^{2}a_{111}^{2}+a_{001}^{2}a_{110}^{2}+a_{010}^{2}a_{101}^{2}+a_{011}^{2}a_{100}^{2})\\
&-&2(a_{000}a_{001}a_{110}a_{111}+a_{000}a_{010}a_{101}a_{111}+a_{000}a_{011}a_{100}a_{111}\\
&+&a_{001}a_{010}a_{101}a_{110}+a_{001}a_{011}a_{110}a_{100}+a_{010}a_{011}a_{101}a_{100})\\
&+&4(a_{000}a_{011}a_{101}a_{110}+a_{001}a_{010}a_{100}a_{111}).
\end{eqnarray*}
\end{defn}
If $\sum a_{ijk}=1$ then treating all entries formally as joint cell
probabilities (without positivity constraints) we can simplify this
formula using the change of coordinates to central moments. The
reparametrizations in Appendix A are well defined for this extended
space of probabilities and we have that
%
\begin{equation}\label{eq:hyperdet}
{\rm Det} \,A=\mu_{123}^{2}+4\mu_{12}\mu_{13}\mu_{23},
\end{equation}
which can be verified by direct computations.

From the construction of tree cumulants (c.f. Appendix \ref
{app:change}) it follows that \mbox{$\kappa_{I}=\mu_{I}$} for all
$I\subseteq[n]$ such that $2\leq|I|\leq3$. Henceforth, for clarity,
these lower order tree cumulants will be written as their more familiar
corresponding central moments.
\begin{prop}[The semialgebraic description of the tripod model]\label
{lem:semi_tripod}
Let $\mathcal{M}_{3}$ be the general Markov model on a tripod tree $T$
rooted in any node of $T$. Let $P$ be a $2\times2\times2$ probability
table for three binary random variables $(X_{1}, X_{2}, X_{3})$ with
central moments $\mu_{12}, \mu_{13},\mu_{23}$, $\mu_{123}$ (equivalent
to the corresponding tree cumulants) and means $\lambda_{i}$, for
$i=1,2,3$. Then $P\in\mathcal{M}_{3}$ if and only if one of the
following two cases occurs:
\begin{itemize}
\item[(i)] $\mu_{123}=0$ and at least two of the three covariances $\mu
_{12},\mu_{13},\mu_{23}$ vanish.
\item[(ii)] $\mu_{12}\mu_{13}\mu_{23}> 0$ and
%
\begin{equation}\label{eq:series-ineq-tripod2}
\begin{array}{l}
|\mu_{jk}|\sqrt{{\rm Det}\,P}-\mu_{123}\mu_{jk}\leq(1-\bar{\mu}_{i})\mu
_{jk}^{2},\\
|\mu_{jk}|\sqrt{{\rm Det}\,P}+\mu_{123}\mu_{jk}\leq(1+ \bar{\mu
}_{i})\mu_{jk}^{2}
\end{array}
\end{equation}
for all $i=1,2,3$ where by $j,k$ we denote elements of $\{1,2,3\}
\setminus i$.
\end{itemize}
\end{prop}
\begin{proof}[Sketch of the proof]
The proof is given in Appendix \ref{sec:proofs}. Here, for convenience,
we give its outline. Denote by $\mathcal{M}\subseteq\Delta_{7}$ the
family of distributions described by (i) and (ii). We need to show that
$\mathcal{M}_{3}=\mathcal{M}$. To show that $\mathcal{M}_{3}\subseteq
\mathcal{M}$ we use the parametrization in Proposition \ref
{prop:monomial} to prove that either (i) holds or it does not, and
then, inequalities in (ii) are equivalent to (\ref{eq:constraints}). To
show the opposite inclusion we propose formulae for the parameters in
terms of the observed distribution given by \cite[Corollary
5.5]{pwz-2010-identifiability}, and show that this formulae agree with
the parametrization in Proposition \ref{prop:monomial} up to the sign.
The inequality $\mu_{12}\mu_{13}\mu_{23}>0$ assures that there is a
choice of signs for the parameters such that the parametrization holds exactly.
\end{proof}

All the points satisfying (i) correspond to submodels of $\mathcal{M}$
where some of the observed variables are independent of each other.

\section{Inferential issues related to the semialgebraic
description}\label{sec:inference}

There are at least three reasons why the implicit inequality
constraints of this model class can have a critical
impact on a statistical analysis of this model class. First, used in
conjunction with other geometric techniques these inequalities help us
determine, whether or not
the likelihood associated with a given tree model has multiple local
maxima. Second, it gives us the basis for developing simple model
diagnostics which complement those associated with implicit algebraic
constraints. Finally, awareness of whether these constraints are active
for given data set enables us to identify when standard numerical
methods might fail both for estimation and model selection across
different candidate trees. We consider and illustrate all these issues below.

Proposition \ref{lem:semi_tripod} and Theorem \ref{th:parameters0} give
explicit descriptions of tree models as subsets of the probability
simplex and hence also as submodels of the multinomial model. The
literature on constrained multinomial models (see \cite
{davis2009analysis} for a review) gives many examples of what may go
wrong in this case. If the multiway marginal table of observed random
variables is sampled at random then its likelihood will be given as the
multinomial likelihood constrained to the model. The unconstrained
multinomial likelihood is of course a very well-behaved function. In
particular it is log-concave and its unique maximum is given by the
sample proportions $\hat{p}$ as long as all the entries of $\hat{p}$
are nonzero. However, after constraining to the model this function may
become much more complicated.\looseness=1

We know that unidentifiability of parameters causes estimation problems
associated for example with multiple local maxima of the likelihood and
the posterior density. However, because the constraints on the model do
not define a convex region, the constrained likelihood will not
necessarily have a unique maximum (see Figure \ref{fig:cstrlike}). So
even if we use ways of cleverly accounting for the aliasing caused by
unidentifiability we can still be left with other multiple local
solutions induced by the violations of the constraints. This, in turn,
can make estimation schemes unstable. The discussion below complements
results presented in \cite{chor2000mml}.

If the unconstrained multinomial maximum likelihood estimator given by
the sample proportions satisfies the equation but does not satisfy some
of the inequalities then the MLE of the given tree model will \textit
{always} lie on the boundary of the parameter space $\Theta_{T}$. Of
course, if all the inequalities hold but some of the equalities do not
then, in principle, it is not such a serious problem as the estimates
will typically lie in the interior of the parameter space. However, if
there are even the smallest perturbations of the model class we are
likely to be drawn outside the feasible region. This is a phenomenon
observed in many applied analyzes of these models (see, e.g. \cite
{chor2000mml}). This occurs even in the simple tripod tree above where
the feasible region accounts for only 8\% of $\Delta_{7}$. Of course
simply sampling from the tree model itself will not identify this
potential difficulty since such samples will automatically not violate
the constraints in any significant way. But if the tree only
approximately holds then we begin to encounter certain difficulties.

\begin{figure}[t!]
\includegraphics[scale=0.3]{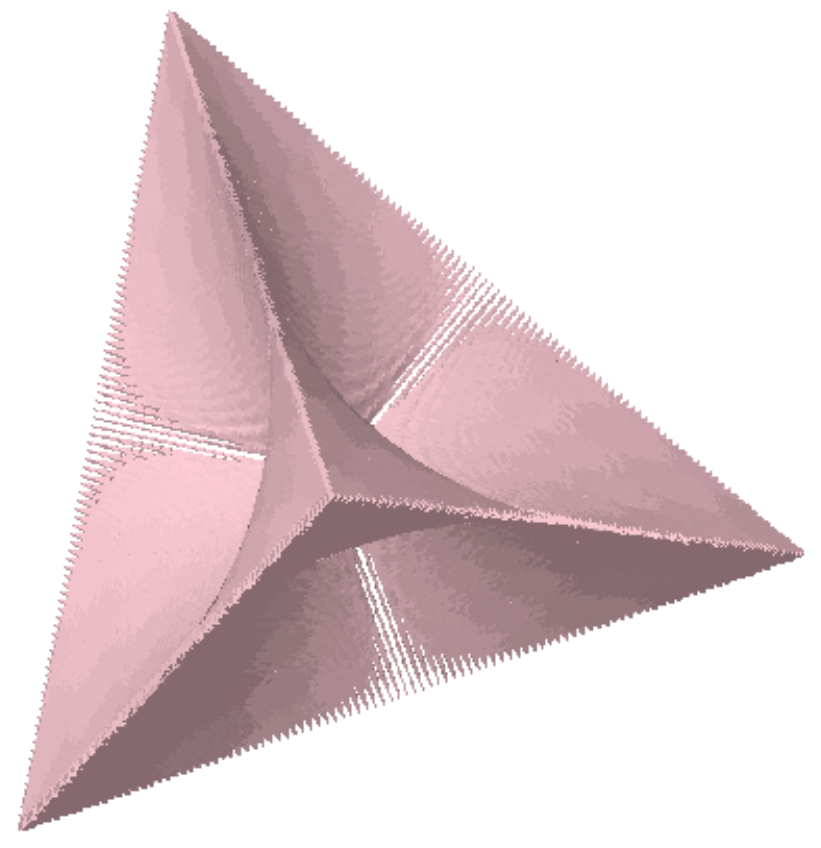}\hspace{.1cm}\includegraphics
[scale=0.3]{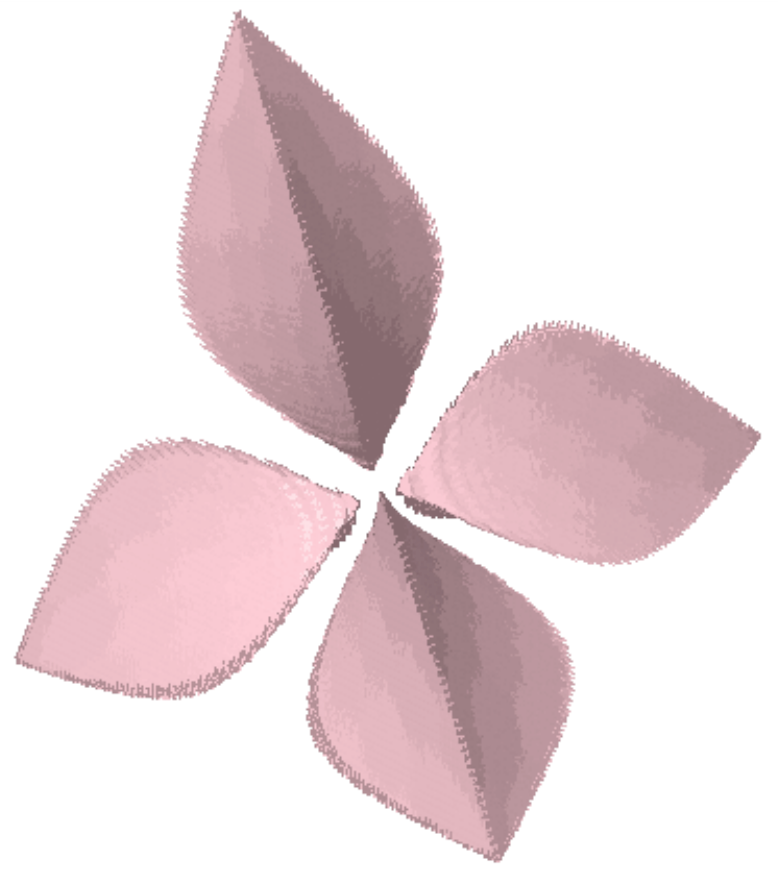}\hspace{.1cm}
\includegraphics[scale=0.3]{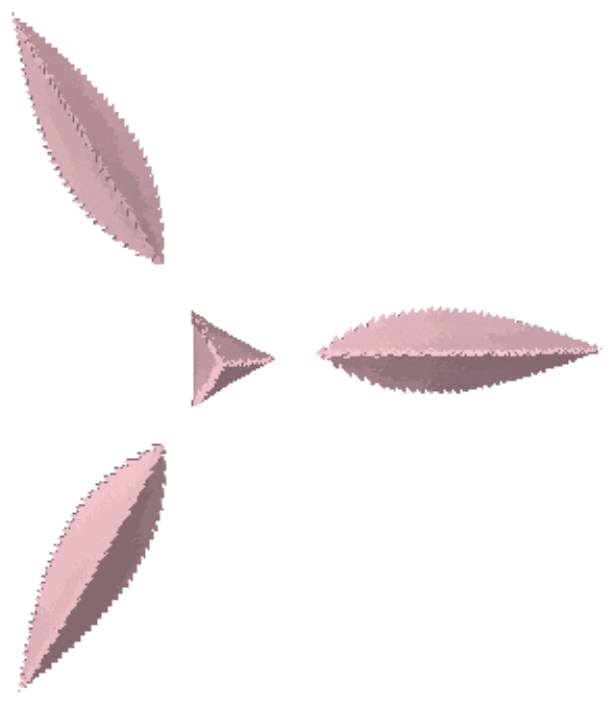}
\caption{The space of all possible covariances $\mu_{12},\mu_{13},\mu
_{23}$ for the tripod tree model in the case when $\lambda_{1}=\lambda
_{2}=\lambda_{3}=\frac{1}{2}$ and $\mu_{123}$ is equal to $0$, $0.005$
and $0.02$ (from left to right).}\label{fig:tripod-constr}
\end{figure}

\begin{figure}[h!]
\includegraphics[scale=0.5]{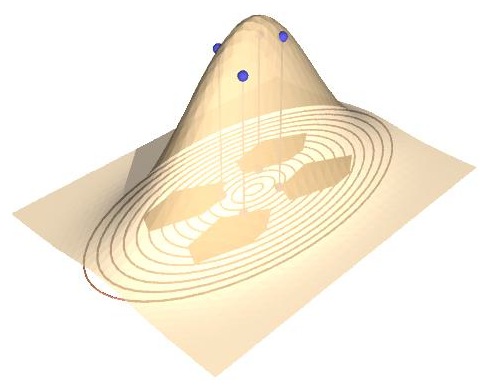}
\caption{The multinomial likelihood and a submodel of the saturated
model given by four disjoint regions. The four local maxima are
obtained on boundaries of these regions.}\label{fig:cstrlike}
\end{figure}

Since the tripod tree model $\mathcal{M}_{3}$ is of full dimension
there are no non-trivial phylogenetic invariants and so the feasible
regions of the model class are purely associated with inequality
constraints and so particularly straightforward. In Figure \ref
{fig:tripod-constr} we depict these constraints as they apply to the
second order moments of the three observed variables given
some typical values of the other coordinates. For example there are
four components corresponding to four possible choices of signs for
covariances satisfying $\mu_{12}\mu_{13}\mu_{23}\geq0$.


We can now give an explicit illustration of the type of multimodality
that can be induced in this context. The likelihood function $\ell:
\Theta_{T}\rightarrow\mathbb{R}$ for the tripod tree model can be also
treated as a function on $\Delta_{7}$ by $\ell(\theta)=\ell(p(\theta))$
in which case it will be denoted by $\ell(p)$. Since we understand the
parametrization $p:\,\Theta_{T}\rightarrow\Delta_{7}$ of $\mathcal
{M}_{3}$ then understanding $\ell(p)$ gives us automatically
understanding of $\ell(\theta)$. The advantage is that in this setting $
\ell(p)$ is just obtained as the multinomial likelihood function $\ell
(p)=\ell(p;x)=\prod p_{ijk}^{x_{ijk}}$ constrained to the model as
explained above. If $\hat{p}$ lies in the model class $\mathcal{M}_{3}$
then $\ell(p)$ has a unique maximum and the maxima of $\ell(\theta)$
can be obtained by mapping back $\hat{p}$ to the parameter space $\Theta
_{T}$ by using \cite[Equation (3)]{pwz-2010-identifiability}. This
result generalizes.
\begin{cor}\label{cor:MLEs}
Let $T=(V,E)$ be a phylogenetic tree with $n$ leaves and let $\mathcal
{M}_{T}$ be the corresponding tree model. If $\hat{p}\in\mathcal
{M}_{T}$ then \cite[Corollary 5.5]{pwz-2010-identifiability} gives the
formulae for the maximum likelihood estimators. In the case when the
number of MLEs is finite, there are always exactly $2^{|V|-n}$ MLEs
which are equivalent up to switching labels of the hidden variables.
\end{cor}

We have however argued that usually $\hat{p}\notin\mathcal{M}_{T}$. In
this case there is potentially more than one local maximum of the
constrained multinomial likelihood function. Let $\hat{p}$ the sample
proportions for some observed data on three binary random variables. We
have three possible scenarios:
\begin{itemize}
\item[(i)] $\hat{p}\in\mathcal{M}_{3}$ and then $\ell(p)$ is unimodal.
\item[(ii)] $\hat{p}\notin\mathcal{M}_{3}$ and $\ell(p)$ is multimodal
but there exists only one global maximum.
\item[(iii)] $\hat{p}\notin\mathcal{M}_{3}$ and $\ell(p)$ has multiple
global maxima.
\end{itemize}
The situation in (iii) raises an interesting question related to the
model identifiability. For every data point satisfying (iii) we are not
able to identify the parameters using the maximum likelihood estimation
even if we take into account the label switching problem.

Of course from the numerical point of view the situation in (ii) and
(iii) may describe equally bad scenarios since in both cases the
algorithms become unstable even for arbitrary large sample sizes. Thus
suppose that a sample of size $10000$ has been observed
%
\begin{equation}\label{eq:data}
\left[
\begin{array}{cc|cc}
x_{000} & x_{001} & x_{100} & x_{101}\\
x_{010} & x_{011} & x_{110} & x_{111}
\end{array}
\right]=\left[
\begin{array}{cc|cc}
2069 & 16 & 2242 & 331\\
2678& 863 & 442 & 1359
\end{array}
\right].
\end{equation}
By direct computations we check that all the constraint in Proposition
\ref{lem:semi_tripod} hold apart from $\mu_{12}\mu_{13}\mu_{23}\geq0$
and hence $\hat{p}$ does not lie in $\mathcal{M}_{3}$. The
corresponding parameters will lie on the boundary of the parameter
space. We performed the following simulation. We sampled uniformly from
$\Theta_{T}=[0,1]^{7}$ the starting parameters for the EM algorithm and
noted the results of the EM approximation. For $100$ iterations the
procedure found four different isolated maxima given in Table \ref{tab:em}.


\begin{table}
\caption{Results of the EM algorithm}\label{tab:em}
\vspace*{-15pt}
\[
\arraycolsep=11.5pt
\begin{array}{@{\ \ }c|ccccccc@{\ \ }}
\hline
& \theta_{1}^{(r)} & \theta_{1|0}^{(1)} & \theta_{1|1}^{(1)} & \theta
_{1|0}^{(2)} & \theta_{1|1}^{(2)}& \theta_{1|0}^{(3)}& \theta
_{1|1}^{(3)}\\
\hline
1&0.4658 & 0.3371 & 0.5524 & 1.0000 & 0.0000 & 0.4159& 0.0745\\
2&0.5342 & 0.5524 & 0.3371 & 0.0000 & 1.0000& 0.0745& 0.4159\\
3&0.4771 & 0.0000 & 0.9167 & 0.6369 & 0.4216 & 0.1468 & 0.3775\\
4&0.5229 & 0.9167 & 0.0000 & 0.4216 & 0.6369 & 0.3775 & 0.1468\\
\hline
\end{array}
\]
\vspace*{-12pt}
\end{table}

Up to label switching on the inner node these are two distinct
maximizers of the log-likelihood function $\ell(\theta)$ corresponding
to rows $1,3$. The value of the log-likelihood function, computed as
$\sum_{ijk}x_{ijk}\log p_{ijk}$, is equal to $-18387$ and $-18917$
respectively. Both points correspond to somewhat degenerate tripod tree
models where one of the observed variables is functionally related to
the hidden variable. For example the first point lies on the submodel
given by $X_{1}\indep X_{3}|X_{2}$. We performed a similar analysis for
other data points for which only $\mu_{12}\mu_{13}\mu_{23}\geq0$ fails
and three different EM maximizers were often found. In every case the
maximizers corresponded to degenerate submodels. In conjunction with
\cite[Theorem 5.4]{pwz-2010-identifiability} we also have data for
which the likelihood function $\ell(\theta)$ is maximized over an
infinite number of points. This for example holds for any data such
that the constrained multinomial likelihood is maximized over a point
such that $p_{0ij}=\lambda p_{1ij}$ for some $\lambda$ and each
$i,j=0,1$. In this case ${\mu}_{12}={\mu}_{13}=0$ and the MLEs form a
set of a positive dimension by \cite[Theorem~5.4]{pwz-2010-identifiability}.

We note that the whole discussion above remains valid for more general
tree models. The conditional independence properties of tree models
imply that, since any three leaves are separated by an inner node, the
corresponding marginal distributions form a tripod tree model.
Demanding that tripod tree constraints must be satisfied by \textit
{all} triples of observed random variables cuts out all but a small
proportion of the probability simplex. Furthermore by Theorem \ref
{th:parameters0} we know that, in addition, many other constraints
involving higher order moments will also apply. Therefore, the types of
issues we illustrated above become increasingly critical for inference
on trees, which in practical applications are of a much higher
dimension. Thus real-world data will typically satisfy all the
constraints defining the model very rarely. This, in turn, tends to
result in multimodality of the likelihood function and MLEs lying on
the boundary of the parameter space.

By acknowledging the existence of the inequality constraints we have
already demonstrated how graphical methods can be used to identify why
and where the fitted tree model might be flawed. Most naively, when
samples are very large we could calculate the sample moments and notice
which inequality constraints are active on the data set presented. When
these lie outside these regions then we have strong information that
the fitted tree model is inappropriate and we can expect there to be
problems with both estimation - as illustrated above - and model
selection. Slightly more sophisticatedly we could also compare the
model MLE: constrained as it is by these inequalities, with the MLE in
the saturated model. Likelihood ratio statistics can then be used to
measure the extent of the model inaccuracy. Of course this comparison
can be performed directly. However, then we lose the geometrical
insight as to exactly why and how the model is failing. This insight
will be helpful in guiding us in identifying alternative models that
might better explain the data. We note that the likelihood ratio
statistics for a constrained multinomial model against the saturated
model in general will not asymptotically have the $\chi^{2}$
distribution (see e.g. \cite{chernoff1954distribution}). If the
constrains are linear then the underlying distribution is called the
chi-bar squared distribution (see \cite{davis2009analysis}). The
situation is however much more complicated for tree models since here
the constraints define a union of non-convex bodies. In the end of
Section \ref{sec:metrics} we provide a short discussion on a
description of $\mathcal{M}_{T}$ in terms of convex sets.

Inequalities are also relevant for the model choice. Suppose that the
sufficient statistic does not satisfy some inequalities for each of the
models under analysis. Then asymptotic model selection techniques like
BIC can mislead. The effective parameter size will be miscounted
because at least some of the MLEs will lie of the boundary of the space
(see e.g. \cite{rusakov2006ams,pwz-2010-bic}). Model selection
based on Bayes factors will also tend to be unrobust. Since the
estimates lie on the boundary the marginal likelihood for each of the
models depends heavily on the tail behavior of the prior distribution
on that boundary. See \cite{smith2010robustness} and \cite
{smith2008isoseparation} for explanations of why this is so. For
example a standard choice of a prior distribution for conditional
distributions in tree models is the Dirichlet distribution. However,
for different choices of its prior parameters the Bayes factors
generated by the prior tails can be very different. Note that within
the Bayesian paradigm the sampling of the tripod tree is
straightforward once we recognize the constraint structure using a
simple importance sampler generating samples from $\Delta_{7}$ and
rejecting if they do not satisfy the defining inequalities. Of course
this is not the only way of specifying a prior density for selecting
between the saturated model and the tree model. However, our suggestion
is very simple to implement and its inferential consequences are more
transparent than more conventional methods using default priors within
the conventional probabilitistic parametrization, where the selection
can be highly dependent on the tails of priors.

\section{Explicit expression of implied inequality constraints}\label
{sec:metrics}

In this section we discuss the geometry of general tree models. First,
we use some links to tree metrics to provide a simple set of algebraic
constraints on the model space. Then, in Theorem \ref{th:parameters0},
we provide the complete semialgebraic description for this model class.

Let $T=(V,E)$ be a general undirected tree with $n$ leaves and $T^{r}$
the tree $T$ rooted in $r\in V$. Before stating the main theorem of the
paper we first show how to obtain an elegant set of necessary
constraints on $\mathcal{M}_{T}$. In this section we assume that $\bar
{\mu}_{r}^{2}\neq1$ and $\eta_{u,v}\neq0$ for all $(u,v)\in E$. By
\cite[Remark 4.3]{pwz-2010-identifiability}, this implies that $\bar{\mu
}_{v}^{2}\neq1$ for all $v\in V$. Since ${\rm Var}(Y_{u})=\frac
{1}{4}(1-\bar{\mu}_{u}^{2})$ the correlation between $Y_{u}$ and
$Y_{v}$ is defined as $\rho_{uv}=\frac{4\mu_{uv}}{\sqrt{(1-\bar{\mu
}_{u}^{2})(1-\bar{\mu}_{v}^{2})}}$. This gives
%
\begin{equation}\label{eq:rhouv}
\rho_{uv}=\eta_{u,v}\sqrt{\frac{1-\bar{\mu}_{u}^{2}}{1-\bar{\mu
}_{v}^{2}}}=\eta_{v,u}\sqrt{\frac{1-\bar{\mu}_{v}^{2}}{1-\bar{\mu}_{u}^{2}}}.
\end{equation}
\begin{lem}\label{lem:prodrho}
For any $i,j\in[n]$ let $E({ij})$ be the set of edges on the unique
path joining $i$ and $j$ in $T$. Then
%
\begin{equation}\label{eq:prod_rij}
\rho_{ij}=\prod_{(u,v)\in E({ij})}\rho_{uv}
\end{equation}
for each probability distribution in ${\mathcal{M}}_T^{\kappa}$ such
that all the correlations are well defined.
\end{lem}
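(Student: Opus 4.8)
The plan is to reduce the statement to the monomial parametrisation of Proposition~\ref{prop:monomial} applied to the single pair $I=\{i,j\}$, together with the telescoping already visible in~(\ref{eq:rhouv}). By Remark~\ref{rem:rootings} the set $\cM_T^\kappa$ does not depend on the rooting, and both sides of~(\ref{eq:prod_rij}) are invariant under re-rooting: the left-hand side $\rho_{ij}$ is a function of the observed distribution alone, while on the right-hand side $E(ij)$ is a set of (undirected) edges and, by~(\ref{eq:rhouv}), $\rho_{uv}=\rho_{vu}$. Hence it suffices to prove~(\ref{eq:prod_rij}) for $T$ rooted at the leaf $i$. Write the path joining $i$ and $j$ as $i=v_0,v_1,\dots,v_k=j$; in the rooting at $i$ all its edges point away from $i$, so $E(ij)=\{(v_{l-1},v_l):1\le l\le k\}$ and the node of the path closest to the root is $r(ij)=i$.

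First I would apply Proposition~\ref{prop:monomial} (and, when $T$ is not trivalent, its extension to arbitrary trees via Lemma~4.2 of~\cite{pwz2010-identifiability}) to $\kappa_{ij}=\mu_{ij}$. The spanning subtree $T(\{i,j\})$ is exactly the path above, so every node of ${\rm int}(V(\{i,j\}))$ has degree two there and all factors $\bar\mu_v^{\deg(v)-2}$ in~(\ref{eq:kappa_def_general}) equal $1$; this gives
$$\mu_{ij}=\tfrac14\bigl(1-\bar\mu_i^2\bigr)\prod_{l=1}^{k}\eta_{v_{l-1},v_l}.$$

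Next I would turn each regression coefficient into a correlation: by~(\ref{eq:rhouv}), $\eta_{v_{l-1},v_l}=\rho_{v_{l-1}v_l}\sqrt{(1-\bar\mu_{v_l}^2)/(1-\bar\mu_{v_{l-1}}^2)}$, and taking the product over $l=1,\dots,k$ the square-root factors telescope to $\sqrt{(1-\bar\mu_j^2)/(1-\bar\mu_i^2)}$. This is the one genuine computation; it requires $1-\bar\mu_v^2\ne 0$ for every $v$ on the path, which holds because we have assumed $\bar\mu_r^2\ne1$ and $\eta_{u,v}\ne0$ for all edges, whence $\bar\mu_v^2\ne1$ for all $v$ by Remark~4.3 of~\cite{pwz2010-identifiability}. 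Substituting back yields $\mu_{ij}=\tfrac14\sqrt{(1-\bar\mu_i^2)(1-\bar\mu_j^2)}\,\prod_{(u,v)\in E(ij)}\rho_{uv}$, and dividing by $\tfrac14\sqrt{(1-\bar\mu_i^2)(1-\bar\mu_j^2)}$ — i.e. using the definition $\rho_{ij}=4\mu_{ij}/\sqrt{(1-\bar\mu_i^2)(1-\bar\mu_j^2)}$ — gives~(\ref{eq:prod_rij}).

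I do not anticipate a serious obstacle. The two points needing care are the legitimacy of re-rooting at $i$ (handled by Remark~\ref{rem:rootings} and the symmetry of $\rho_{uv}$) and bookkeeping of the non-degeneracy conditions so that every correlation appearing is well defined; the combinatorial input — that a path has only degree-two interior nodes, so the $\bar\mu_v$-powers disappear — is immediate. If one prefers to avoid re-rooting, the same telescoping can be carried out for a general rooting by splitting $E(ij)$ at the node $r(ij)$ into the two segments directed away from it; the factors involving $\bar\mu_{r(ij)}$ then cancel against the prefactor in~(\ref{eq:kappa_def_general}), leaving the same conclusion.
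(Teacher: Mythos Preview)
Your proof is correct and follows essentially the same route as the paper: apply~(\ref{eq:kappa_def_general}) to $I=\{i,j\}$ to express $\mu_{ij}$ as $\tfrac14(1-\bar\mu_{r(ij)}^2)\prod_{(u,v)\in E(ij)}\eta_{u,v}$, then use~(\ref{eq:rhouv}) to convert each $\eta_{u,v}$ to $\rho_{uv}$ and telescope the square-root factors. The only cosmetic difference is that you re-root at the leaf $i$ so that $r(ij)=i$, whereas the paper keeps a generic root and lets the two residual factors $\sqrt{(1-\bar\mu_{r}^2)/(1-\bar\mu_i^2)}$ and $\sqrt{(1-\bar\mu_{r}^2)/(1-\bar\mu_j^2)}$ absorb the prefactor---exactly the alternative you sketch in your final paragraph.
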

\begin{proof}
By (\ref{eq:kappa_def_general}) applied to $T(ij)$ we have $\mu
_{ij}=\frac{1}{4}(1-\bar{\mu}_{r}^{2})\prod_{(u,v)\in E(ij)}\eta
_{u,v}$, where $r$ is the root of the path between $i$ and $j$ and hence
\[
\rho_{ij}=\sqrt{\frac{1-\bar{\mu}_{r}^{2}}{1-\bar{\mu}_{i}^{2}}}\sqrt
{\frac{1-\bar{\mu}_{r}^{2}}{1-\bar{\mu}_{j}^{2}}}\prod_{(u,v)\in
E(ij)}\eta_{u,v}.
\]
Now apply (\ref{eq:rhouv}) to each $\eta_{u,v}$ in the product above to
show (\ref{eq:prod_rij}).
\end{proof}

The above equation allows us to demonstrate an interesting
reformulation of our problem in term of tree metrics (c.f. \cite
[Section 7]{semple2003pol}) which we explain below (see also Cavender
\cite{cavender1997443}).
\begin{defn}\label{def:treemetric}
A function $\delta: \,[n]\times[n]\rightarrow\mathbb{R}$ is called a
\textit{tree metric} if there exists a tree $T=(V,E)$ with the set of
leaves given by $[n]$ and with a positive real-valued weighting $w:
E\rightarrow\mathbb{R}_{>0}$ such that for all $i,j\in[n]$
\[
\delta(i,j)=\left\{
\begin{array}{ll}
\sum_{e\in E(ij)} w(e),&\mbox{ if } i\neq j,\\
0, & \mbox{otherwise}.
\end{array}
\right.
\]
\end{defn}

Let now $d: V\times V\rightarrow\mathbb{R}$ be a map defined as
\[
d(k,l)=\left\{
\begin{array}{ll}
-\log(\rho_{kl}^{2}), & \mbox{for all } k,l\in V \mbox{ such that } \rho
_{kl}\neq0,\\
+\infty, & \mbox{otherwise}
\end{array}
\right.
\]
then $d(k,l)\geq0$ because $\rho_{kl}^{2}\leq1$ and $d(k,k)=0$ for
all $k\in V$ since $\rho_{kk}=1$. If $K\in\mathcal{M}_T^\kappa$ then by
(\ref{eq:prod_rij}) $\rho_{ij}^{2}=\prod_{e\in E(ij)}\rho_{e}^{2}$ and
we can define map $d_{(T;K)}: [n]\times[n]\rightarrow\mathbb{R}$
%
\begin{equation}\label{eq:metric}
-\log(\rho_{ij}^{2})=d_{(T;K)}(i,j)=\left\{
\begin{array}{ll}
\sum_{(u,v)\in E(ij)} d(u,v), & \mbox{if } i\neq j,\\
0, & \mbox{otherwise.}
\end{array}
\right.
\end{equation}
This map is a tree metric by Definition \ref{def:treemetric}. In our
case we have a point in the model space defining all the second order
correlations and $d_{(T;K)}(i,j)$ for $i,j\in[n]$. The question is:
What are the conditions for the ``distances'' between leaves so that
there exists a tree $T$ and edge lengths $d(u,v)$ for all $(u,v)\in E$
such that (\ref{eq:metric}) is satisfied? Or equivalently: What are the
conditions on the absolute values of the second order correlations in
order that $\rho_{ij}^{2}=\prod_{e\in E_{ij}}\rho_{e}^{2}$ (for some
edge correlations) is satisfied? We have the following theorem.
\begin{thm}[Tree-Metric Theorem, Buneman \cite{buneman1974nmp}]\label
{th:3metric}
A function $\delta:\, [n]\times[n]\rightarrow\mathbb{R}$ is a tree
metric on $[n]$ if and only if for every four (not necessarily
distinct) elements $i,j,k,l\in[n]$,
\[
\delta(i,j)+\delta(k,l)\leq\max\left\{\delta(i,k)+\delta(j,l),\delta
(i,l)+\delta(j,k) \right\}.
\]
Moreover, a tree metric defines the tree uniquely.
\end{thm}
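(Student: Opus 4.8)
The plan is to treat the two implications and the uniqueness clause in turn. Necessity (a tree metric satisfies the four-point condition) is a direct computation on the Steiner topology of four leaves, so I would dispatch it first. Sufficiency I would prove by induction on $n$: delete a leaf, realize the restricted dissimilarity by a tree via the inductive hypothesis, and reattach the deleted leaf at the unique location forced by the four-point condition. Uniqueness I would obtain by showing that both the combinatorial tree and its edge weights are recoverable from $\delta$ alone.

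For necessity, suppose $\delta=\delta_{(T,w)}$ for a tree $T$ with positive weights $w$ and fix four distinct leaves $i,j,k,l$. The smallest subtree of $T$ containing them has at most two vertices of degree $\geq 3$; call the path between them the central path and let $m\geq 0$ be its total weight (with $m=0$ when the subtree is a star). The four leaves split as one of $ij|kl$, $ik|jl$, $il|jk$ according to which pair hangs off a common branch vertex, and in the topology $ij|kl$ one checks directly that $\delta(i,k)+\delta(j,l)=\delta(i,l)+\delta(j,k)=\delta(i,j)+\delta(k,l)+2m$. Hence two of the three pairwise sums are equal and at least as large as the third, which is (more than) the stated inequality. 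When some of $i,j,k,l$ coincide the inequality reduces to symmetry, the vanishing of $\delta$ on the diagonal, and the triangle inequality, all of which hold for a tree metric.

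For sufficiency I argue by induction on $n$. One may assume $\delta$ is symmetric with zero diagonal, since otherwise it is certainly not a tree metric; feeding $i=j$ into the four-point condition then yields the triangle inequality (and hence $\delta\geq 0$), so $\delta$ is automatically a metric. For $n\leq 3$ the realizing tree is the star whose pendant edge at $i$ has length $\tfrac{1}{2}(\delta(i,j)+\delta(i,k)-\delta(j,k))$, nonnegative by the triangle inequality. For the inductive step, remove leaf $n$ and let $T'$ realize $\delta|_{[n-1]}$, which exists and is unique by the inductive hypothesis. Choosing two leaves $i,j$ of $T'$, the deleted leaf $n$ must be joined to the $i$--$j$ path (or to a subtree hanging off it) at the point a distance $\tfrac{1}{2}(\delta(i,n)+\delta(i,j)-\delta(j,n))$ from $i$, via an edge whose length is the corresponding Gromov product. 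The crux is that this attachment datum is independent of the choice of $i$ and $j$: comparing two reference pairs, the four-point condition applied to the four leaves involved forces the two prescriptions to agree, so there is a well-defined vertex --- possibly created by subdividing an edge of $T'$ --- to which $n$ is attached. A last application of the four-point condition verifies that the enlarged tree realizes $\delta$ on all of $[n]$. This inductive step --- proving that the reattachment point is consistent across all reference pairs and that the augmented tree reproduces every distance --- is where the four-point hypothesis is used in full strength and is the main obstacle; the rest is bookkeeping.

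For uniqueness, I would show $\delta$ determines $(T,w)$. The topology of each quartet $\{i,j,k,l\}$ is read off from $\delta$: the split is $ij|kl$ precisely when $\delta(i,j)+\delta(k,l)$ is the strict minimum of the three pairwise sums, and the quartet is unresolved exactly when all three are equal. A tree on leaf set $[n]$ is determined up to isomorphism by the family of quartet splits it displays (the standard quartet-reconstruction fact), so the combinatorial tree is unique. Given the tree, the edge weights are the unique solution of a linear system: the pendant weight at leaf $i$ equals $\tfrac{1}{2}(\delta(i,j)+\delta(i,k)-\delta(j,k))$ for any $j,k$ in distinct components of $T-i$, and the weight of an internal edge $e$ equals $\tfrac{1}{2}(\delta(i,k)+\delta(j,l)-\delta(i,j)-\delta(k,l))$ for any leaves $i,j$ on one side of $e$ and $k,l$ on the other with displayed split $ij|kl$. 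Hence $T$ and $w$ are uniquely determined.
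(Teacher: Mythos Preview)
The paper does not give its own proof of this theorem; it is quoted as a classical result of Buneman (cited as \cite{buneman1974nmp}, and see also \cite[Section~7]{semple2003pol}) and used as a black box to translate the four-point condition into the covariance inequalities~(\ref{eq:suffic-ineq1}). There is therefore nothing in the paper to compare your argument against.

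That said, your outline is the standard route to this result and is essentially correct. The necessity computation via the Steiner topology of four leaves is exactly right, and the uniqueness argument via quartet topologies and the linear formulas for edge weights is the usual one. The only place where a referee would ask for more is the inductive reattachment step: you acknowledge that showing the attachment point is independent of the reference pair $\{i,j\}$ is ``the crux'', but you do not actually carry out that verification. In a full proof one fixes two reference pairs, applies the four-point condition to the resulting quadruple, and checks case by case (depending on which of the three pairwise sums is smallest) that the two prescribed attachment points coincide on $T'$. This is straightforward but does require the case analysis to be written out; as it stands your sketch asserts the conclusion without performing it.
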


This theorem gives us a set of explicit constraints on the
distributions in a tree model. Since $\delta(i,j)=\log(-\rho_{ij})$ the
constraints in Theorem \ref{th:3metric} translate in terms of
correlations to
\[
-\log(\rho_{ij}^{2}\rho_{kl}^{2})\leq-\min\{\log(\rho_{ik}^{2}\rho
_{jl}^{2}),\log(\rho_{il}^{2}\rho_{jk}^{2}) \} .
\]
Since $\log$ is a monotone function we obtain
%
\begin{equation}\label{eq:suffic-ineq1}
\min\left\{\frac{\rho_{ik}^{2}\rho_{jl}^{2}}{\rho_{ij}^{2}\rho
_{kl}^{2}},\frac{\rho_{il}^{2}\rho_{jk}^{2}}{\rho_{ij}^{2}\rho
_{kl}^{2}} \right\}=\min\left\{\frac{\mu_{ik}^{2}\mu_{jl}^{2}}{\mu
_{ij}^{2}\mu_{kl}^{2}},\frac{\mu_{il}^{2}\mu_{jk}^{2}}{\mu_{ij}^{2}\mu
_{kl}^{2}} \right\}\leq1
\end{equation}
for all not necessarily distinct leaves $i,j,k,l\in[n]$. Hence, using
the relation between correlations and tree metrics given in \cite
{cavender1997443} we managed to provide a set of simple semialgebraic
constraints on the model. Furthermore, later in Theorem \ref
{th:parameters0} we show that these constraints are not the only active
constraints on the model $\mathcal{M}_{T}$. Before we present this
theorem it is helpful to make some simple observations about the
relationship between correlations and probabilistic tree models.

Since $\rho_{uv}$ can have different signs we define a signed tree
metric as a tree metric with an additional sign assignment for each
edge of $T$.
\begin{lem}\label{lem:inequalities2}
Let $T$ be a tree with set of leaves $[n]$. Suppose that we have a map
$\sigma:[n]\times[n]\rightarrow\{-1,1\}$. Then there exists a map
$s_{0}:E\rightarrow\{-1,1\}$ such that for all $i,j\in[n]$
%
\begin{equation}\label{eq:sij}
\sigma(i,j)=\prod_{(u,v)\in E(ij)}s_{0}(u,v)
\end{equation}
if and only if for all triples $i,j,k\in[n]$ $\sigma(i,j)\sigma
(i,k)\sigma(j,k)=1$.
\end{lem}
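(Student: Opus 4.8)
This is the assertion that a $\{-1,+1\}$-valued multiplicative ``$1$-cochain'' on the path space of a tree is realizable precisely when its coboundary on every triangle of leaves is trivial, so the natural strategy is to treat the two implications separately: the forward direction by a median-vertex cancellation, the backward direction by exhibiting an explicit vertex potential.

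For necessity, I would suppose $s_{0}$ satisfying (\ref{eq:sij}) is given and fix leaves $i,j,k$. Introduce the median vertex $m$ of $\{i,j,k\}$ in $T$, i.e.\ the unique node lying on all three paths, and observe that the three ``legs'' $E(im)$, $E(jm)$, $E(km)$ are pairwise edge-disjoint with $E(ij)=E(im)\cup E(jm)$, $E(ik)=E(im)\cup E(km)$, $E(jk)=E(jm)\cup E(km)$. Substituting these decompositions into $\sigma(i,j)\sigma(i,k)\sigma(j,k)=\prod_{E(ij)}s_{0}\cdot\prod_{E(ik)}s_{0}\cdot\prod_{E(jk)}s_{0}$ makes every edge of each leg occur exactly twice, so the product collapses to $1$. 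Degenerate triples with repeated leaves are covered by the convention that $E(ii)$ is empty and the empty product equals $1$; in particular this already forces $\sigma(i,i)=1$ and, applied to the triple $\{i,j,i\}$, $\sigma(i,j)=\sigma(j,i)$.

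For sufficiency, assume the triple condition holds. I would fix the leaf $1$ and define a potential $\tau\colon V\to\{-1,+1\}$ by $\tau(1)=1$, $\tau(j)=\sigma(1,j)$ for every other leaf $j$, and $\tau(v)=1$ for every inner node $v$ (any assignment on inner nodes works), and then set $s_{0}(u,v):=\tau(u)\tau(v)$ for each edge $(u,v)\in E$. Writing the path joining $i$ and $j$ as $i=v_{0},v_{1},\ldots,v_{p}=j$, the product $\prod_{(u,v)\in E(ij)}s_{0}(u,v)=\prod_{t=1}^{p}\tau(v_{t-1})\tau(v_{t})=\tau(v_{0})\tau(v_{p})\prod_{t=1}^{p-1}\tau(v_{t})^{2}=\tau(i)\tau(j)$. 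For leaves $i\ne j$ this equals $\sigma(1,i)\sigma(1,j)$, which by the triple condition applied to $\{1,i,j\}$ equals $\sigma(i,j)$; for $i=j$ it equals $1=\sigma(i,i)$. Hence $s_{0}$ satisfies (\ref{eq:sij}).

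The argument is essentially bookkeeping, and the one point that needs a moment's care — the ``hard'' step, such as it is — is that the telescoping in the sufficiency direction must not be spoiled by inner nodes of large degree. That is exactly why $\tau$ may be assigned arbitrarily at inner nodes: these are precisely the intermediate vertices of every leaf-to-leaf path and therefore cancel in squared pairs, leaving only the leaf endpoints, where $\tau$ was pinned down so as to encode $\sigma$.
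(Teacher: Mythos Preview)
Your proof is correct. The forward implication via the median vertex and squared cancellation is essentially the paper's argument.

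Your sufficiency argument, however, is genuinely different from the paper's and considerably cleaner. The paper proceeds by a nested induction: first it contracts degree-two paths so that every inner node has degree at least three, then it inducts on the number of inner nodes, with the base case of star trees handled by a further induction on the number of leaves. At each inductive step it peels off a leaf (or a cluster of leaves attached to a peripheral inner node), invokes the induction hypothesis on the smaller tree, and argues that the remaining edge sign can be filled in consistently using the triple condition. Your construction bypasses all of this structural bookkeeping by observing that on a tree any $\{\pm1\}$ edge labelling of the form $s_{0}(u,v)=\tau(u)\tau(v)$ automatically telescopes along paths to $\tau(i)\tau(j)$, so the whole problem reduces to choosing $\tau$ on the leaves, which you do by anchoring at leaf $1$ and setting $\tau(j)=\sigma(1,j)$. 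The triple condition is then invoked exactly once, for triples containing the anchor leaf.

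What your approach buys is brevity and a direct formula for $s_{0}$; what the paper's approach buys is perhaps a clearer picture of how the sign assignment propagates through the tree structure, though at the cost of two layers of induction. Your argument also handles the degenerate triples (repeated leaves, forcing $\sigma(i,i)=1$ and symmetry) more explicitly than the paper does.
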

The proof is given in Appendix \ref{sec:proofs}.

The following proposition gives a set of simple constraints on
probability distribution in tree models. This may be particularly
useful in practice since it involves only computing pairwise margins of
the data and it enables us to check if a data point may come from a
phylogenetic tree model.
\begin{prop}\label{prop:simple-ineq}
Let $P\in\Delta_{2^{n}-1}$ be a probability distribution. If $P\in
\mathcal{M}_{T}$ for some tree $T$ with $n$ leaves then
%
\begin{equation}\label{eq:suffic-ineq2}
0\leq\min\left\{\frac{\mu_{ik}\mu_{jl}}{\mu_{ij}\mu_{kl}},\frac{\mu
_{il}\mu_{jk}}{\mu_{ij}\mu_{kl}} \right\}\leq1
\end{equation}
for all (not necessarily distinct) $i,j,k,l\in[n]$ whenever $\mu
_{ij},\mu_{kl}\neq0$.
\end{prop}
\begin{proof}
Lemma \ref{lem:inequalities2} implies that for all $i,j,k\in[n]$
necessarily $\mu_{ij}\mu_{ik}\mu_{jk}\geq0$. This in particular
implies that $\frac{\mu_{ik}\mu_{jl}}{\mu_{ij}\mu_{kl}}\geq0$ for all
$i,j,k,l\in[n]$. By taking the square root in (\ref{eq:suffic-ineq1})
these constraints can be combined to give the inequalities in~(\ref{eq:suffic-ineq2}).
\end{proof}

In Theorem \ref{th:parameters0} we show that (\ref{eq:suffic-ineq2})
provides the complete set of inequality constraints on $\mathcal
{M}_{T}$ that involve only second order moments in their expression.
The fact that additional constraints involving higher order moments
exist is illustrated in the following simple example.
\begin{exmp}
Consider the tripod tree model in Proposition \ref{lem:semi_tripod}.
Let $K$ be a point in $\mathcal{K}_{T}$ given by ${\lambda}_{i}=0.15$
for $i=1,2,3$, $\mu_{ij}=0.0625$ (or equivalently $\rho_{ij}=0.49$) for
each $i<j$ and $\mu_{123}= 0.0526$. This point lies in the space of
tree cumulants $\mathcal{K}_{T}$ which can be checked by mapping back
the central moments to probabilities, since the resulting vector
$[p_{\alpha}]$ lies in $\Delta_{7}$.

Clearly $K$ satisfies all the tree metric constraints in (\ref
{eq:suffic-ineq2}). The equation (\ref{eq:prod_rij}) is satisfied with
$\rho_{hi}=0.7$ for each $i=1,2,3$. We now show that despite this
$K\notin\mathcal{M}_{T}^{\kappa}$. For if $K\in\mathcal{M}_{T}^{\kappa
}$ then we could find $\bar{\mu}_{h}$ and $\eta_{h,i}$ satisfying
constraints in (\ref{eq:constraints}) so that (\ref{eq:star}) held.
Using the formulae in \cite[Corollary 5.5]{pwz-2010-identifiability} it
is easy to compute that $\bar{\mu}_{h}=0.86$ and $\eta_{h,i}\approx
0.98$. However, $K$ is not in the model since these parameters do not
lie in $\Omega_{T}$. Indeed,
\[
(1+\bar{\mu}_{h})\eta_{h,i}\approx1.8228 > (1+\bar{\mu}_{i})=1.7
\]
and hence (\ref{eq:constraints}) is not satisfied.

The consequence of the fact that the parameters do not lie in $\Omega
_{T}$ is that this parametrization does not lead to a valid assignment
of conditional probabilities to the edges of the tree. For example with
the values given above we can calculate that the induced marginal
distribution for $(X_{i},H)$ would have to satisfy \mbox{$\mathbb
{P}(X_{i}=0,H=1)=-0.0043$} which is obviously not a consistent
assignment for a probability model. Thus, there must exist other
constraints involving observed higher order moments that need to hold
for a probability model to be valid. We note that for the tripod tree
these were given by Proposition \ref{lem:semi_tripod}.
\end{exmp}

%
%

The following theorem gives the complete set of constraints which have
to be satisfied by tree cumulants to lie in $\mathcal{M}_{T}$ in the
case when $T$ is a trivalent tree. Let $P\in\Delta_{2^{n}-1}$ be the
probability distribution of the vector $(X_{1},\ldots, X_{n})$ then for
any $i,j,k\in[n]$ let $P^{ijk}$ denote the $2\times2\times2$ table of
the marginal distribution of $(X_{i},X_{j},X_{k})$.
%
\begin{thm}\label{th:parameters0}
Let $T=(V,E)$ be a trivalent tree with $n$ leaves and $\mathcal
{M}_{T}\subseteq\Delta_{2^{n}-1}$ be the model defined as an image of
the parametrization in (\ref{eq:p_albar2}). Suppose $P$ is a joint
probability distribution on $n$ binary variables. Then $P\in\mathcal
{M}_{T}$ if and only if the following conditions hold:
\begin{description}
\item[(C1)] For each edge split $A|B$ (c.f. Definition \ref
{def:edge-part}) of the set of leaves of $T$ whenever we have four
nonempty subsets (not necessarily disjoint) $I_{1},I_{2}\subseteq A$,
$J_{1},J_{2}\subseteq B$ then
\[
\kappa_{I_{1}J_{1}}\kappa_{I_{2}J_{2}}-\kappa_{I_{1}J_{2}}\kappa_{I_{2}J_{1}}=0.
\]
\item[(C2)] For all $1\leq i<j<k\leq n$ the corresponding marginal
distribution $P^{ijk}$ lies in the tripod model.
\item[(C3)] for all $I\subseteq[n]$ if there exist $i,j\in I$ such that
$\mu_{ij}=0$ then $\kappa_{I}=0$\\[-.2cm]
\item[(C4)] for any $ i,j,k,l\in[n]$ such that there exists $e\in E$
inducing a split $A|B$ such that $i,j\in A$ and $k,l\in B$ we have
\begin{equation*}\label{eq:ineqII}
\begin{array}{l}
(2\mu_{ik}\mu_{jl})^{2}\leq(\sqrt{\mu_{jl}^{2}{\rm Det} \,P^{ijk}}\pm
\mu_{jl}\mu_{ijk})(\sqrt{{\rm Det} \,P^{ikl}}\mp\mu_{ikl}).
\end{array}
\end{equation*}
\end{description}
Moreover, if $\mu_{ij}\neq0$ for all $i,j\in[n]$ then the constraints
in Proposition \ref{prop:simple-ineq} are the only constraints
involving only second order moments.
\end{thm}
\begin{proof}[Sketch of the proof]
The proof is given in Appendix \ref{sec:proof}. Here, for convenience,
we give its outline. Denote by $\mathcal{M}\subseteq\Delta_{2^{n}-1}$
the family of distributions described by (C1)-(C4). We need to show
that $\mathcal{M}_{T}=\mathcal{M}$. To show that $\mathcal
{M}_{T}\subseteq\mathcal{M}$ we use the parametrization in Proposition
\ref{prop:monomial} to show that (C1) and (C3) always hold, and that
(C2) and (C4) are equivalent to (\ref{eq:constraints}). To show the
opposite inclusion we propose formulae for the parameters in terms of
the observed distribution given by \cite[Corollary
5.5]{pwz-2010-identifiability}, and show that this formulae agree with
the parametrization in Proposition \ref{prop:monomial} up to the sign.
The last part is technical since we need to show that (C1)-(C4) also
imply that there is a choice of signs for the parameters such that the
parametrization in Proposition \ref{prop:monomial} holds exactly.
\end{proof}


Theorem \ref{th:parameters0} has been formulated for trivalent trees.
However, any tree with degrees of some nodes higher than three can be
realized as a submodel of a trivalent tree model as explained in \cite
[Section 4]{pwz-2010-identifiability}. Also, including degree two nodes
does not change anything in the induced marginal distribution. This
result is well known (see e.g. \cite[Lemma 2.1]{pwz-2010-identifiability}).

A natural question arises for how large trees it is feasible to verify
the constraints defining the model. The equality constraints in (C1)
can be expressed directly in the raw probabilities and they are easy to
check even for relatively large trees. This, by \cite[Theorem
4]{allman2008pia},can be done using so called edge flattenings, which
is explained in more details in Appendix \ref{sec:invariants}. Checking
the other constraints requires only computing ${n\choose2}$
covariances between the observed variables and ${n\choose3}$ third
order central moments. In particular, in practice there is no need of
changing the coordinates from the raw probabilities to tree cumulants
which can be quite complicated even for relatively small trees.


Another important practical aspect is whether there exist some
efficient convex bounds for the model in the space of the raw
probabilities. The answer to this question is negative, which follows
from the fact that ${\rm conv}(\mathcal{M}_{T})=\Delta_{2^{n}-1}$. This
is easily seen from the fact that $\mathcal{M}_{{\rm ind}}\subseteq
\mathcal{M}_{T}$, where $\mathcal{M}_{{\rm ind}}$ denotes the model of
full independence $X_{1}\indep\ldots\indep X_{n}$, and that ${\rm
conv}(\mathcal{M}_{{\rm ind}})=\Delta_{2^{n}-1}$. To get some
informative convex bounds one possibility is to generalize the tripod
tree case. Here the model consists of four components depicted in
Figure \ref{fig:cstrlike} corresponding to different sign patterns of
the observed covariances. These components are equivalent up to
rotation and symmetry. Instead of taking the convex hull of the whole
model we suggest the analysis of the convex hull of each of the
components separately. This is also well motivated by the fact that in
phylogenetics it is usually assumed that $\eta_{u,v}>0$ for all
$(u,v)\in E$ which means restriction to one of the components with all
the observed covariances positive. We will not discuss this issue here
in more detail.

\section{Example: The quartet tree model}\label{sec:quartet}

We can check that the point $K\in\mathcal{K}_{T}$ provided in Table \ref
{tab:quartet1} satisfies all the constraints in Theorem \ref{th:parameters0}.
%
\begin{table}
\caption{Moments and tree cumulants for a probability assignment which
lies in $\mathcal{M}_{T}$, where $T$ is the quartet tree}\label{tab:quartet1}
{
\vspace*{-15pt}
\[
\arraycolsep=11.5pt
\begin{array}{@{\ \ }ccccc@{\ \ }}
\hline
\alpha& I & p_{\alpha} & \lambda_{I} & \kappa_{I}\\
\hline
0000 & \emptyset&\frac{163837}{1417176}& 1& 0\\[.1cm]
0001 & 4&\frac{100735}{1417176}& \frac{1}{2}& 0\\[.1cm]
0010 & 3&\frac{48167}{708588}& \frac{1}{2}& 0\\[.1cm]
0011 & 34&\frac{45955}{708588}& \frac{253}{972}& \frac{5}{486}\\[.1cm]
0100 & 2&\frac{85507}{1417176}& \frac{1}{2}& 0\\[.1cm]
0101 & 24&\frac{76007}{1417176}& \frac{251}{972}& \frac{2}{243}\\[.1cm]
0110 & 23&\frac{36559}{708588}& \frac{85}{324}& \frac{1}{81}\\[.1cm]
0111 & 234&\frac{35531}{708588}& \frac{2489}{17496}& \frac{4}{2187}\\[.1cm]
1000 & 1&\frac{41255}{708588}& \frac{1}{2}& 0\\[.1cm]
1001 & 14&\frac{37315}{708588}& \frac{253}{972}& \frac{5}{486}\\[.1cm]
1010 & 13&\frac{73199}{1417176}& \frac{43}{162}& \frac{5}{324}\\[.1cm]
1011 & 134&\frac{75355}{1417176}& \frac{1271}{8748}& \frac{5}{2187}\\[.1cm]
1100 & 12&\frac{43471}{708588}& \frac{829}{2916}& \frac{25}{729}\\[.1cm]
1101 & 124&\frac{44171}{708588}& \frac{8107}{52488}& \frac{20}{6561}\\[.1cm]
1110 & 123&\frac{97063}{1417176}& \frac{1405}{8748}& \frac{10}{2187}\\[.1cm]
1111 & 1234&\frac{130547}{1417176}& \frac{130547}{1417176}& \frac
{40}{59049}\\
\hline
\end{array}
\]
}
\vspace*{-6pt}
\end{table}
It is convenient to provide the numbers as rationals so that the
equalities can be checked exactly. To check (C1), note for example that
\[
\kappa_{13}\kappa_{24}-\kappa_{14}\kappa_{23}=\frac{5}{324}\cdot\frac
{2}{243}-\frac{5}{486}\cdot\frac{1}{81}=0,
\]
\vspace*{-3pt}
\[
\kappa_{123}\kappa_{134}-\kappa_{1234}\kappa_{13}=\frac{10}{2187}\cdot
\frac{5}{2187}-\frac{40}{59049}\cdot\frac{5}{324}=0.
\]
To check (C2) verify for example that ${\rm Det} P^{123}=\frac
{25}{531441}$ and
\[
\left((1\pm\bar{\mu}_{1})\mu_{23}\mp\mu_{123}\right)^{2}=\biggl\{\frac
{1369}{4782969},\frac{289}{4782969}\biggr\}
\]
\vspace*{-3pt}
\[
\left((1\pm\bar{\mu}_{2})\mu_{13}\mp\mu_{123}\right)^{2}=\biggl\{\frac
{30625}{76527504},\frac{9025}{76527504}\biggr\}
\]
\vspace*{-3pt}
\[
\left((1\pm\bar{\mu}_{3})\mu_{12}\mp\mu_{123}\right)^{2}=\biggl\{\frac
{7225}{4782969},\frac{4225}{4782969}\biggr\}
\]
and hence
\[
{\rm Det} P^{123} \leq\min\left\{\left((1\pm\bar{\mu}_{\sigma(i)})\mu
_{\sigma(j)\sigma(k)}\mp\mu_{ijk}\right)^{2}\right\}= \frac{289}{4782969}
\]
is satisfied.
\eject

From the point of view of the original motivation a different scenario
is of interest. Imagine that we have $K\in\mathcal{K}_{T}$ such that
all the equalities in (C1) are satisfied, i.e. all the phylogenetic
invariants hold. If one of the constraints in (C2)-(C5) does not hold
then $K\notin\mathcal{M}_{T}^{\kappa}$. This shows that the method of
phylogenetic invariants as commonly used can lead to spurious results.
For example consider sample proportions and the corresponding tree
cumulants as in Table~\ref{tab:SEMquartet2}.
%
\begin{table}
\caption{Moments and tree cumulants of the given probability assignment
which does not lie in $\mathcal{M}_{T}$}\label{tab:SEMquartet2}
{
\vspace*{-15pt}
\[
\arraycolsep=11.5pt
\begin{array}{@{\ \ }ccccc@{\ \ }}
\hline
\alpha& I & p_{\alpha} & \lambda_{I} & \kappa_{I}\\
\hline
0000 & \emptyset&\frac{163837}{1417176}& 1& 0\\[.1cm]
0001 & 4&\frac{83213}{1417176}& \frac{1}{2}& 0\\[.1cm]
0010 & 3&\frac{10999}{177147}& \frac{1}{2}& 0\\[.1cm]
0011 & 34&\frac{11519}{177147}& \frac{1009}{2916}& \frac{70}{729}\\[.1cm]
0100 & 2&\frac{105785}{1417176}& \frac{1}{2}& 0\\[.1cm]
0101 & 24&\frac{52489}{1417176}& \frac{97}{324}& \frac{4}{81}\\[.1cm]
0110 & 23&\frac{6875}{177147}& \frac{95}{324}& \frac{7}{162}\\[.1cm]
0111 & 234&\frac{8515}{177147}& \frac{4285}{17496}& \frac{56}{2187}\\[.1cm]
1000 & 1&\frac{13834}{177147}& \frac{1}{2}& 0\\[.1cm]
1001 & 14&\frac{7226}{177147}& \frac{283}{972}& \frac{10}{243}\\[.1cm]
1010 & 13&\frac{61777}{1417176}& \frac{139}{486}& \frac{35}{972}\\[.1cm]
1011 & 134&\frac{51137}{1417176}& \frac{6113}{26244}& \frac{140}{6561}\\[.1cm]
1100 & 12&\frac{13760}{177147}& \frac{293}{972}& \frac{25}{486}\\[.1cm]
1101 & 124&\frac{3088}{177147}& \frac{3749}{17496}& \frac{40}{2187}\\[.1cm]
1110 & 123&\frac{13445}{1417176}& \frac{1805}{8748}& \frac{35}{2187}\\[.1cm]
1111 & 1234&\frac{278965}{1417176}& \frac{278965}{1417176}& \frac
{560}{59049}\\
\hline
\end{array}
\]
}
\end{table}
It can be checked that for this point all the equations in (C1) are
satisfied. However, this point does not lie in the model space. Using
the formulae in \cite[Corollary 5.5]{pwz-2010-identifiability}, which
gives the inverse map for the parametrization, it is simple to confirm
that the point mapping to $K$ satisfies $\theta^{(4)}_{1|1}=\frac
{67}{54}>1$. This cannot therefore be a probability and so $\theta
\notin\Theta_{T}$.

%

\section{Discussion}

The new coordinate system proposed in \cite{pwz-2010-identifiability}
provides a better insight into the geometry of phylogenetic tree models
with binary observations. The product form of the parametrization is
useful and has already enabled us to obtain the full geometric
description of the model class.

Of course it is one thing formally being able to identify the
constraints in the model and quite another to use this understanding
for model selection and estimation in realistically large scale
problems. The results in this paper only formally allow us to determine
explicitly the extremely complex nature of the feasible solution space
of a given tree model and determine whether a proposed estimate is
feasible. So they simply represent the first stage in constructing
methodology which supports these insights with an inferential
technology that can address statistical issues in large tree. In
particular, there remains the much more challenging issue of designing
samplers that use our results explicitly to efficiently estimate and
explore the tree model space. We are currently investigating this issue
and hope to report such algorithms in a later paper.

One of the interesting implications of our results for phylogenetic
analysis is that it enables us to consider different, simpler model
classes containing the original one in such a way that the whole
evolutionary interpretation in terms of the tree topologies remains
valid. If we were interested only in the tree we could consider the
model defined only by a subsets of constraints in Theorem \ref
{th:parameters0} involving only covariances. The cost of this reduction
is that the conditional independencies induced by the original model no
longer hold, which, in turn, affects the interpretation of the model.
We note that this approach is in a similar spirit to that employed to
motivate the MAG model class introduced in \cite{spirtes97heuristic}.

\section*{Acknowledgments}

Diane Maclagan and John Rhodes contributed substantially to this paper.
We would also like to thank Bernd Sturmfels for a stimulating
discussion at the early stage of our work and Lior Pachter for pointing
out reference \cite{cavender1997443}.

\appendix
{

\section{Change of coordinates}\label{app:change}

In this section we index raw probabilities with subsets of $[n]$
instead of $\{0,1\}^{n}$. We identify $I\subseteq[n]$ with $\alpha\in
\{0,1\}^{n}$ such that $\alpha_{i}=1$ only if $i\in I$. We first change
our coordinates from the raw probabilities $p=[p_{I}]_{I\subseteq[n]}$
to the non-central moments $\mathbf{\lambda}=[\lambda_{I}]_{I\subseteq
[n]}$, where $\lambda_{I}=\mathbb{E} (\prod_{i\in I} X_{i})$. This is a
linear map $f_{p\lambda}:\mathbb{R}^{2^{n}}\rightarrow\mathbb
{R}^{2^{n}}$ with determinant equal to one, where the components
$\lambda_{I}$ of the vector $\mathbf{\lambda}=f_{p\lambda}(p)$ are
defined by
%
\begin{equation}\label{eq:lambda_in_p}
\lambda_I=\sum_{J\supseteq I} p_J\qquad\mbox{ for any } I\subseteq[n].
\end{equation}
In particular $\lambda_{\emptyset}=1$ for all probability distributions
and the image $f_{p\lambda}(\Delta_{2^{n}-1})$ is contained in the
hyperplane defined by $\lambda_{\emptyset}=1$. Moreover, from (\ref
{eq:lambda_in_p}), it follows that the $\lambda$'s are just marginal
probabilities.
The linearity of the expectation implies that the central moments can
be expressed in terms of non-central moments. Define $\mu_{I}=\mathbb
{E} (\prod_{i\in I} U_{i})$, where $U_{i}=X_{i}-\mathbb{E} X_{i}$. Then
%
\begin{equation}\label{eq:central}
\mu_I=\sum_{J\subseteq[n]}(-1)^{|J|} \lambda_{I\setminus J} \prod
_{i\in J} \lambda_{i} \quad\mbox{ for }I\subseteq[n].
\end{equation}
Using these equations we can transform coordinates from the non-central
moments $\lambda=[\lambda_{I}]$ to another set of variables given by
all the means $\lambda_{{1}},\ldots, \lambda_{{n}}$ and central moments
$[\mu_{I}]$ for $I\subseteq[n]$. The polynomial map $f_{\lambda\mu
}:\mathbb{R}^{2^{n}}\rightarrow\mathbb{R}^{{n}}\times\mathbb
{R}^{2^{n}}$ is an identity on the first $n$ coordinates corresponding
to the means $\lambda_{{1}},\ldots, \lambda_{{n}}$ and is defined on
the remaining coordinates using the equations (\ref{eq:central}). Let
$\mathcal{C}_{n}=(f_{\lambda\mu} \circ f_{p\lambda})(\Delta
_{2^{n}-1})$. This is contained in a subspace of $\mathbb{R}^{n}\times
\mathbb{R}^{2^{n}}$ given by
\[
\mu_{\emptyset}=1 \quad\mbox{ and }\quad\mu_{{1}}=\cdots=\mu_{{n}}=0.
\]
Since $f_{\lambda\mu}$ is invertible (see \cite[Appendix
A.1]{pwz-2010-identifiability}) it provides a change of coordinates
from the non-central moments to a coordinate system on $\mathcal
{C}_{n}$ given by $\lambda_{1},\ldots, \lambda_{n}$ together with $\mu
_{I}$ for all $I\subseteq[n]$ such that $|I|\geq2$. Note that the
Jacobian of $f_{\lambda\mu}\circ f_{p\lambda}: \Delta
_{2^{n}-1}\rightarrow\mathcal{C}_{n}$ is constant and equal to one.

The final change of coordinates requires some combinatorics.
\begin{defn}\label{def:edge-part}
Let $T=(V,E)$ be a tree with $n$ leaves. An \textit{edge split} is a
partition of $[n]$ into two non-empty sets induced by removing an edge
$e\in E$ and restricting $[n]$ to the connected components of the
resulting graph. By an \textit{edge partition} we mean any partition
$B_1|\cdots|B_k$ of the set of leaves of $T$ induced by removing a
subset of $E$. Each $B_i$ is called a \textit{block} of the partition.
\end{defn}

Let $\Pi_{T}$ denote the partially ordered set (poset) of all tree
partitions of the set of leaves. The ordering in this poset is induced
from the ordering in the lattice $\Pi_{n}$ of all partitions of $[n]$
(see \cite[Example 3.1.1.d]{stanley2006enumerative}). Thus for $\pi
=B_{1}|\cdots|B_{r}$ and $\nu=B'_{1}|\cdots|B_{s}'$ we have $\pi\leq\nu
$ if every block of $\pi$ is contained in one of the blocks of $\nu$.
The poset $\Pi_{T}$ has a unique minimal element $1|2|\cdots|n$ induced
by removing all edges in $E$ and the maximal one with no edges removed
which is equal to a single block $[n]$. The maximal element is denoted
by $\hat{1}$ and the minimal one is denoted by $\hat{0}$.

For any poset $\Pi$ a \textit{M\"{o}bius function} $\mathfrak{m}_\Pi:\Pi
\times\Pi\rightarrow\mathbb{R}$ can be defined in such a way that
$\mathfrak{m}_\Pi(\pi,\pi)=1$ for every $\pi\in\Pi$,
$\mathfrak{m}_\Pi(\nu,\pi)=-\sum_{\nu\leq\delta<\pi} \mathfrak{m}_\Pi
(\nu,\delta)$ for $\nu<\delta$ in $\Pi$ and is zero otherwise (c.f.
\cite[Section 3.7]{stanley2006enumerative}). Let $T(W)$, for $W\subset
V$, denote the minimal subtree of $T$ containing $W$ in its set of
vertices. Then $\Pi_{T(W)}$ is the poset of all multisplits of the set
of leaves of $T(W)$ induced by edges of $T(W)$. The M\"{o}bius function
on $\Pi_{T(W)}$ will be denoted by $\mathfrak{m}_W$ and the M\"{o}bius
function on $\Pi_{T}$ will be denoted by $\mathfrak{m}$. Let $\hat
{0}_W$ and $\hat{1}_W$ denote the minimal and the maximal element of
$\Pi_{T(W)}$ respectively.

Consider a map $f_{\mu\kappa}: \mathbb{R}^{n}\times\mathbb
{R}^{2^{n}}\rightarrow\mathbb{R}^{n}\times\mathbb{R}^{2^{n}}$ where
the coordinates in the domain are denoted by $\lambda_{1},\ldots,
\lambda_{n}$ and $\mu_{I}$ for $I\subseteq[n]$ and let the coordinates
of the image space be denoted by $\lambda_{1},\ldots, \lambda_{n}$ and
$\kappa_{I}$ for $I\subseteq[n]$. The map is defined as the identity
on the first $n$ coordinates corresponding to $\lambda_{1},\ldots,
\lambda_{n}$ and
%
\begin{equation}\label{eq:kappa-in-rho}
\kappa_{I}=\sum_{\pi\in\Pi_{{T}(I)}}\mathfrak{m}_I(\pi,\hat{1}_{I})
\prod_{B\in\pi}\mu_B \quad\mbox{ for all } I\subseteq[n],
\end{equation}
where by convention $\kappa_{\emptyset}=\mu_{\emptyset}$. Let $\mathcal
{K}_{T}=f_{\mu\kappa}(\mathcal{C}_{n})$. Note that for any $I\subseteq
[n]$ such that $|I|\leq3$, $\kappa_I=\mu_I$. In particular $\mathcal
{K}_{T}$ is contained in the subspace of $\mathbb{R}^{n}\times\mathbb
{R}^{2^{n}}$ given by
\[
\kappa_{\emptyset}=1,\quad\kappa_{1}=\cdots=\kappa_{n}=0
\]
The map $f_{\mu\kappa}:\mathcal{C}_{n}\rightarrow\mathcal{K}_{T}$ is a
polynomial isomorphism with a polynomial inverse $f_{\kappa\mu}$. It
therefore gives a change of coordinates to a coordinate system on
$\mathcal{K}_{T}$ given by $\lambda_{1},\ldots, \lambda_{n}$ and $\kappa
_{I}$ for $|I|\geq2$. The exact form of the inverse map is given by
the M\"{o}bius inversion formula (c.f. \cite[Section
3.2]{pwz-2010-identifiability})
\begin{equation}\label{eq:muinkappa}
\mu_{I}=\sum_{\pi\in\Pi_{{T}(I)}}\prod_{B\in\pi} \kappa_B\quad\mbox{
for all } I\subseteq[n], |I|\geq2.
\end{equation}
Note that after restriction to $\Delta_{2^{n}-1}$, $f_{p\lambda}(\Delta
_{2^{n}-1})$ and $\mathcal{C}_{n}$ respectively all $f_{p\lambda}$,
$f_{\lambda\mu}$ and $f_{\mu\kappa}$ are polynomial maps with
polynomial inverses (c.f. \cite[Appendix A]{pwz-2010-identifiability}).
This therefore implies that there is a polynomial isomorphism between
$\Delta_{2^{n}-1}$ and $\mathcal{K}_{T}$.

\section{Proofs}\label{sec:proofs}

\begin{proof}[Proof of Proposition \ref{lem:semi_tripod}]
By Remark \ref{rem:rootings} $\mathcal{M}_{3}$ does not depend on the
rooting. Therefore, we can assume that $T$ is rooted in $h$. In this
case Proposition \ref{prop:monomial} implies that $\mathcal
{M}_{3}^{\kappa}$ is given by $\lambda_{i}=\frac{1}{2}(1-\bar{\mu
}_{i})$ for $i=1,2,3$ and
%
\begin{equation}\label{eq:star}
\begin{array}{l}
\mu_{ij}=\dfrac{1}{4}(1-\bar{\mu}_{h}^2)\eta_{h,i}\eta_{h,j}\mbox{ for
all } i\neq j\in\{1,2,3\} \mbox{ and }\\[.3cm]
\mu_{123}=\dfrac{1}{4}(1-\bar{\mu}_{h}^2)\bar{\mu}_{h} \eta_{h,1}\eta
_{h,2}\eta_{h,3},
\end{array}
\end{equation}
subject to constraints in (\ref{eq:constraints}).

Denote the subset of $\mathcal{K}_{T}$ given by constraints (i),(ii) by
$\mathcal{M}$. We need to show that $\mathcal{M}=\mathcal{M}_{3}^{\kappa
}$. First, we prove that $\mathcal{M}_{3}^{\kappa}\subseteq\mathcal
{M}$. Let $K=\psi_{T}(\omega)$ for some $\omega\in\Omega_{T}$ with
coordinates given by $\bar{\mu}_{h}$ and $\bar{\mu}_{i}$, $\eta_{h,i}$
for $i=1,2,3$. We consider two cases. Either $(1-\bar{\mu}_{h}^{2})\eta
_{h,1}\eta_{h,2}\eta_{h,3}$ is zero or not. In the first case $\mu
_{123}=0$ and at least two covariances vanish and hence (i) holds.

Now we show that if $(1-\bar{\mu}_{h}^{2})\eta_{h,1}\eta_{h,2}\eta
_{h,3}\neq0$ then (ii) holds. From (\ref{eq:star})
%
\begin{equation}\label{eq:prod-mus}
\mu_{12}\mu_{13}\mu_{23}\quad=\quad\left(\frac{1}{4}(1-\bar{\mu
}_{h}^{2})\right)^{3}(\eta_{h,1}\eta_{h,2}\eta_{h,3})^{2}\quad>\quad0.
\end{equation}
To show that $K$ satisfies (\ref{eq:series-ineq-tripod2}) we can simply
substitute for the corresponding moments using (\ref{eq:star}). After
trivial reductions we then obtain that
\[
|\eta_{h,i}|\pm\bar{\mu}_{h}\eta_{h,i}\leq(1\pm\bar{\mu}_{i}),
\]
which is equivalent to (\ref{eq:constraints}). Therefore, since by
hypothesis (\ref{eq:constraints}) holds, we also have that $\mathcal
{M}_{3}^{\kappa}\subseteq\mathcal{M}$.

To show $\mathcal{M}\subseteq{\mathcal{M}_{3}^{\kappa}}$ we prove that
for $K\in\mathcal{M}$ a parameter $\omega$ in (\ref{eq:star}) exists
which satisfies the constraints defining $\Omega_{T}$ and $K=\psi
_{T}(\omega)$. Let $P$ be the probability distribution corresponding to
$K$. First, consider the points satisfying (i). If all three
covariances vanish for this point then taking $\eta_{h,1}=\eta
_{h,2}=\eta_{h,3}=0$ and $\bar{\mu}_{h}^{2}=1$ we obtain a valid choice
of parameters in (\ref{eq:star}) and their values satisfy (\ref
{eq:constraints}). When one covariance is non-zero, say $\mu_{12}\neq
0$, then, if a choice of parameters exists it must satisfy $\bar{\mu
}_{h}^{2}\neq1$, $\eta_{h,1},\eta_{h,2}\neq0$ and $\eta_{h,3}=0$.
Such a choice of parameters will exist if we can ensure that $\mu
_{12}=(1-\bar{\mu}_{h}^{2})\eta_{h,1}\eta_{h,2}$. This follows from
\cite[Corollary 2]{gilula1979svd} which states that if only $\mu
_{12}\neq0$ then there always exists a choice of parameters for model
$X_{1}\indep X_{2}|H$, where $H$ is hidden.

Consider now case (ii). Since $\mu_{12}\mu_{13}\mu_{23}>0$ then in
particular ${\rm Det} \,P>0$. Set $\bar{\mu}_{h}^{2}=\frac{\mu
_{123}^{2}}{{{\rm Det} \,P}}$ and $\eta_{h,i}^{2}=\frac{{{\rm Det} \,
P}}{\mu_{jk}^{2}}$ for $i=1,2,3$. It follows that $(\frac{1}{4}(1-\bar
{\mu}_{h}^{2}))^{2}\eta_{h,i}^{2}\eta_{h,j}^{2}=\mu_{ij}^{2}$ for
$i,j=1,2,3$ and $(\frac{1}{4}(1-\bar{\mu}_{h}^{2}))^{2} \bar{\mu
}_{h}^{2} \eta_{h,1}^{2}\eta_{h,2}^{2}\eta_{h,3}^{2}=\mu_{123}^{2}$.
This coincides with (\ref{eq:star}) modulo the sign. It can be easily
shown that $\mu_{12}\mu_{13}\mu_{23}>0$ implies that there exist a
choice of signs for $\eta_{h,i}$ for $i=1,2,3$ such that
\[
\frac{1}{4}(1-\bar{\mu}_{h}^{2})\eta_{h,i}\eta_{h,j}=\mu_{ij}
\]
for all $1\leq i<j\leq3$ as in (\ref{eq:star}). For example set ${\rm
sgn}(\eta_{h,i})={\rm sgn}(\mu_{jk})$ and use the fact that, by our
assumption, ${\rm sgn}(\mu_{ij})={\rm sgn}(\mu_{ik}){\rm sgn}(\mu
_{jk})$. This choice of signs already determines the sign of $\bar{\mu
}_{h}$ so that
\[
\frac{1}{4}(1-\bar{\mu}_{h}^{2}) \bar{\mu}_{h} \eta_{h,1}\eta_{h,2}\eta
_{h,3}=\mu_{123}
\]
holds.

It remains to show that parameters set in this way satisfy the
constraints defining $\Omega_{T}$. First note that since $0<4\mu_{12}\mu
_{13}\mu_{23}\leq{\rm Det} \,P$ then $\bar{\mu}_{h}^{2}\in(0,1)$ as
required. From \cite[Appendix D]{pwz-2010-identifiability} we know that
if $(\eta_{h,1},\eta_{h,2},\eta_{h,3},\bar{\mu}_{h})$ is one choice of
parameters then there exists only one alternative choice and it is
$(-\eta_{h,1},-\eta_{h,2},-\eta_{h,3},-\bar{\mu}_{h})$. For a fixed
$i=1,2,3$ it is easily checked that $(\eta_{h,i},\bar{\mu}_{h})$
satisfies (\ref{eq:constraints}) if and only if $(-\eta_{h,i},-\bar{\mu
}_{h})$ does. Therefore, we can assume that $\eta_{h,i}=\frac{\sqrt
{{\rm Det} P}}{|\mu_{jk}|}>0$. In this case $\bar{\mu}_{h}={\rm sgn}(\mu
_{jk})\frac{\mu_{123}}{\sqrt{{\rm Det}P}}$. It follows that (\ref
{eq:constraints}) is satisfied if and only if (\ref
{eq:series-ineq-tripod2}) holds.
\end{proof}

\begin{proof}[Proof of Lemma \ref{lem:inequalities2}]
First assume that the map $s_{0}:E\rightarrow\{-1,1\}$, given in the
statement of the lemma, exists. This induces a map $s:V\times
V\rightarrow\{-1,1\}$ such that $s(k,l)=\prod_{(u,v)\in
E(kl)}s_{0}(u,v)$. For any triple $i,j,k$ there exists a unique inner
node $h$ which is the intersection of all three paths between $i,j,k$.
By the above equation the choice of signs for all $(u,v)\in E$ gives
$s(i,h),s(j,h)$ and $s(k,h)$. Since $s(i,j)=s(i,h)s(j,h)$ and the same
for the two other pairs, we get that
$s(i,j)s(i,k)s(j,k)=s^2(i,h)s^2(j,h)s^2(k,h)=1$ and the result follows
since by construction $\sigma(i,j)=s(i,j)$ for all $i,j\in[n]$.

Now we prove the converse implication. Whenever there is a path $E(uv)$
in $T$ such that all its inner nodes have degree two then a sign
assignment satisfying (\ref{eq:sij}) exists if and only if there exists
a sign assignment for the same tree but with $E(uv)$ contracted to a
single edge $(u,v)$. Hence we can assume that the degree of each inner
node is at least three.

We use an inductive argument with respect to number of hidden nodes.
First we will show that the theorem is true for trees with one inner
node (star trees) denoted by $h$. In this case we will use induction
with respect to number of leaves. It can easily be checked directly
that the theorem is true for the tripod tree. Assume it works for all
star trees with $k\leq m-1$ leaves and let $T$ be a star tree with $m$
leaves. By assumption for any three leaves $i,j,k$: $\sigma(i,j)\sigma
(i,k)\sigma(j,k)=1$. If we consider a subtree with $(1,h)$ deleted then
by induction assumption we can find a consistent choice of signs for
all remaining edges. A choice of a sign for $(1,h)$ consistent with
(\ref{eq:sij}) exists if for all $i\geq2$ $\sigma
(1,i)=s_{0}{(1,h)}s_{0}(i,h)$. This is true if either $\sigma
(1,i)s_{0}(i,h)=1$ for all $i$ or $\sigma(1,i)s_{0}(i,h)=-1$ for all
$i$. Assume it is not true, i.e. there exist two leaves $i,j$ such that
$\sigma(1,i)s_{0}(i,h)=1$ and $\sigma(1,j)s_{0}(j,h)=-1$. Then in
particular since $\sigma(i,j)=s_{0}(i,h)s_{0}(j,h)$ we would have that
$\sigma(1,i)\sigma(1,j)\sigma(i,j)=-1$ which contradicts our assumption.

If the number of the inner nodes is greater than one then pick an inner
node $h$ adjacent to exactly one inner node. Let $h'$ be the inner node
adjacent to $h$ and let $I$ be a subset of leaves which are adjacent to
$h$. Choose one $i\in I$ and consider a subtree $T'$ obtained by
removing all leaves in $I$ and the incident edges apart from the node
$i$ and the edge $(h,i)$. By the induction, since $h$ has degree two in
the resulting subtree, we can find signs for all edges of $T'$. Set
$s_{0}(h,h')=1$ then $s_{0}(h,i)=s(h',i)$ which identifies
$s_{0}(h,i)$. Similarly it can be showed that there exists a choice of
signs for all remaining edges $(i',h)$. The result follows since the
choice of $i\in I$ was arbitrary.
\end{proof}

\section{The proof of the main theorem}\label{sec:proof}

Let $K\in\mathcal{K}_{T}$ have coordinates given by $\lambda_{i}$ for
$i=1,\ldots,n$ and $\kappa_{I}$ for $I\subseteq[n]$ such that $|I|\geq
2$. Let $K^{J}$, $J\subseteq[n]$, denote the projection onto the
coordinates given by $\lambda_{i}$ for $i\in J$ and $\kappa_{I}$,
$I\subseteq J$, $|I|\geq2$. Directly from the definition of $\mathcal
{M}_{T}$ it follows that $K\in\mathcal{M}_{T}^{\kappa}$ if and only if
$K^{I}\in\mathcal{M}_{T(I)}^{\kappa}$ for all $I\subseteq[n]$.

Let $\mathcal{M}$ denote the subset of $\mathcal{K}_{T}$ defined by
constraints in (C1)-(C4). We need to show that $\mathcal{M}=\mathcal
{M}_{T}^{\kappa}$. We divide the proof into series of lemmas.

\begin{lem}\label{lem:inclusion1}
The inclusion $\mathcal{M}_{T}^{\kappa}\subseteq\mathcal{M}$ holds.
\end{lem}
\begin{proof}
Since the rooting is not relevant by Remark \ref{rem:rootings}, we
choose an arbitrary inner node as the root node. Let $K\in\mathcal
{M}_{T}^{\kappa}$ and hence $K=\psi_{T}(\omega)$ for some $\omega\in
\Omega_{T}$.

To show that the equations in (C1) hold let $A|B$ be an edge split and
let $e=(w,w')$ be the edge inducing this split. By $T\setminus e$ we
denote the graph obtained from $T$ by removing the edge $e$. We assume
that $w$ lies in the same connected component of $T\setminus e$ as $A$
and $w'$ lies in the second component of $T\setminus e$. For every
non-empty $I\subseteq A$ and $J\subseteq B$ from Proposition \ref{prop:monomial}
\begin{eqnarray*}
\kappa_{IJ}&=&\frac{1}{4}(1-\bar{\mu}_{r(IJ)}^{2})\prod_{v\in{\rm
int}(V(Iw'))}\bar{\mu}_{v}^{{\rm deg}(v)-2}\prod_{v\in{\rm
int}(V(Jw))}\bar{\mu}_{v}^{{\rm deg}(v)-2}\\
& &{} \cdot\eta_{w,w'}\prod_{(u,v)\in E(Iw)}\eta_{u,v}\prod_{(u,v)\in
E(Jw')}\eta_{u,v}.
\end{eqnarray*}
From this it easily follows that for any non-empty
$I_{1},I_{2}\subseteq A$ and $J_{1},J_{2}\subseteq B$, $\kappa
_{I_{1}J_{1}}\kappa_{I_{2}J_{2}}-\kappa_{I_{1}J_{2}}\kappa
_{I_{2}J_{1}}=0$ if and only if
%
\begin{equation}\label{eq:roots-compensate}
(1-\mu_{r(I_{1}J_{1})}^{2})(1-\mu_{r(I_{2}J_{2})}^{2})=(1-\mu
_{r(I_{1}J_{2})}^{2})(1-\mu_{r(I_{2}J_{1})}^{2}).
\end{equation}
To show that (\ref{eq:roots-compensate}) is always true, we consider
two cases: either $r(AB)\in V(Aw)$ or $r(AB)\in V(Bw')$. If $r(AB)\in
V(Aw)$ then $r(I_{1}J_{1})=r(I_{1}w)$, $r(I_{1}J_{2})=r(I_{1}w)$,
$r(I_{2}J_{1})=r(I_{2}w)$ and $r(I_{2}J_{2})=r(I_{2}w)$. Hence in this
case (\ref{eq:roots-compensate}) holds. The case $r(AB)\in V(Bw')$
follows by symmetry. Therefore the equations in (C1) always hold.

To show that $K$ satisfies (C2) consider the projection $K^{ijk}$ for
each $i,j,k\in[n]$. By \cite[Corollary 2.2]{pwz-2010-identifiability}
$\mathcal{M}_{T(ijk)}^{\kappa}$ is equal to the tripod tree model.
Since $K^{ijk}\in\mathcal{M}_{T(ijk)}^{\kappa}$ then, by Proposition
\ref{lem:semi_tripod}, (C2) must hold. To show that $K$ satisfies (C3)
let $i,j\in[n]$ be such that $\mu_{ij}=0$. Let $I\subseteq[n]$ be
such that $i,j\in I$ and assume that $\kappa_{I}(\omega)\neq0$. Then
by (\ref{eq:kappa_def_general}) in particular $\mu_{r(I)}^{2}\neq1$
and $\eta_{u,v}\neq0$ for all $(u,v)\in E(I)$. By \cite[Remark
4.3]{pwz-2010-identifiability} this implies in particular that $\bar{\mu
}_{r(ij)}^{2}\neq1$. From this, again by (\ref{eq:kappa_def_general}),
it follows that $\mu_{ij}\neq0$ and we get a contradiction. Hence if
$\mu_{ij}=0$ then $\kappa_{I}=0$ for all $I$ such that $i,j\in I$.

To show that $K$ satisfies (C4) let $i,j,k,l\in[n]$ be the four leaves
mentioned in the condition. Let $u$ and $v$ be two inner nodes such
that $u$ separates $i$ from $j$, $v$ separates $k$ from $l$ and $\{u,v\}
$ separates $\{i,j\}$ from $\{k,l\}$. In other words $u$, $v$ are the
only inner nodes of degree three in $T(ijkl)$. By \cite[Lemma
2.1]{pwz-2010-identifiability}, $T(ijkl)$ gives the same model as the
quartet tree with four leaves $i,j,k,l$ and two inner nodes $u$, $v$.
Moreover, by Remark \ref{rem:rootings}, $\mathcal{M}_{T(ijkl)}$ does
not depend on the rooting so we can assume that the tree is rooted in
$u$. Since $K^{ijkl}\in\mathcal{M}_{T(ijkl)}$ then for some parameter choices
\[
\mu_{ik}=\frac{1}{4}(1-\bar{\mu}_{u}^{2})\eta_{u,i}\eta_{u,v}\eta
_{v,k},\quad\mu_{jl}=\frac{1}{4}(1-\bar{\mu}_{u}^{2})\eta_{u,j}\eta
_{u,v}\eta_{v,l}
\]
\[
\mu_{ijk}=\frac{1}{4}(1-\bar{\mu}_{u}^{2})\bar{\mu}_{u}\eta_{u,i}\eta
_{u,j}\eta_{u,v}\eta_{v,k},\quad\mu_{ikl}=\frac{1}{4}(1-\bar{\mu
}_{u}^{2})\bar{\mu}_{v}\eta_{u,i}\eta_{u,v}\eta_{v,k}\eta_{v,l}.
\]
Substitute these equations into (C4). There are then two cases to
consider: $\mu_{uv}\geq0$, $\mu_{uv}<0$. Laborious but elementary
algebra shows that the condition in (C4) is equivalent to (\ref
{eq:constraints}) applied to $(1-\bar{\mu}_{u}^{2})\eta_{u,v}$ and
hence (C4) holds by definition. Consequently $\mathcal{M}_{T}^{\kappa
}\subseteq\mathcal{M}$.
\end{proof}

To show the opposite inclusion is a bit more complicated. We consider
two separate cases. Let $K\in\mathcal{M}$. We construct a point $\omega
_{0}\in\mathbb{R}^{|V|+|E|}$ such that $\omega_{0}\in\Omega_{T}$ and
$\psi_{T}(\omega_{0})=K$, i.e. $\omega_{0}$ is such that, for all
$I\subseteq[n]$ such that $|I|\geq2$, $\kappa_{I}$ can be written in
terms of the parameters in $\omega_{0}$ as in (\ref{eq:kappa_def_general}).
\begin{lem}\label{lem:case1}
Let $K$ be such that $\mu_{ij}\neq0$ for all $i,j\in[n]$. If $K\in
\mathcal{M}$ then $K\in\mathcal{M}_{T}^{\kappa}$.
\end{lem}
\begin{proof}
We set squares of values of all the parameters in terms of the observed
moments using \cite[Corollary 5.5]{pwz-2010-identifiability}. We will
show that the equations in (\ref{eq:kappa_def_general}) must hold for
their absolute values. We will then need to ensure there is at least
one assignment of signs for a set of parameters such that all equations
in (\ref{eq:kappa_def_general}) hold exactly. Finally, we will show
that the parameter vector $\omega_{0}$ defined in this way lies in
$\Omega_{T}$.

For each inner node $h$ of $T$ let $i,j,k\in[n]$ be any three leaves
separated by $h$ in $T$. By (C2) we have that $\mu_{ij}\mu_{ik}\mu
_{jk}> 0$ and hence also that ${\rm Det}P^{ijk}>0$. Now set
%
\begin{equation}\label{eq:paramsq1}
(\bar{\mu}^{0}_{h})^{2}=\frac{\mu_{ijk}^{2}}{{{\rm Det} P^{ijk}}}.
\end{equation}
We show that (C1), which $K$ satisfies by assumption, implies that the
value of $(\bar{\mu}_{h}^{0})^{2}$ does not depend on the choice of
$i,j,k$. It suffices to show that if $k$ is replaced by another leaf
$k'$ such that $i,j,k'$ are separated by $h$ in $T$ then $\frac{\mu
_{ijk}^{2}}{{{\rm Det} P^{ijk}}}=\frac{\mu_{ijk'}^{2}}{{{\rm Det}
P^{ijk'}}}$. Since $h$ has degree three in $T$ then there exists an
edge $e\in E$ inducing a split $A|B$ such that $i,j\in A$ and $k,k'\in
B$. From (C1) it follows that
%
\begin{equation}\label{eq:mupom1}
\mu_{ik}\mu_{jk'}=\mu_{ik'}\mu_{jk},\quad\mu_{ijk}\mu_{ik'}=\mu
_{ijk'}\mu_{ik},\quad\mu_{ijk}\mu_{jk'}=\mu_{ijk'}\mu_{jk}
\end{equation}
and consequently
%
\begin{equation}\label{eq:mupom2}
{\rm Det} P^{ijk}\mu_{ij}\mu_{ik'}\mu_{jk'}={\rm Det} P^{ijk'}\mu
_{ij}\mu_{ik}\mu_{jk}
\end{equation}
which implies that
\[
\frac{\mu_{ijk}^{2}}{{{\rm Det} P^{ijk}}}=\frac{\mu_{ijk}^{2}\mu_{ij}\mu
_{ik'}\mu_{jk'}}{{{\rm Det} P^{ijk}}\mu_{ij}\mu_{ik'}\mu_{jk'}}=\frac
{\mu_{ijk'}^{2}\mu_{ij}\mu_{ik}\mu_{jk}}{{{\rm Det} P^{ijk'}}\mu_{ij}\mu
_{ik}\mu_{jk}}=\frac{\mu_{ijk'}^{2}}{{{\rm Det} P^{ijk'}}}
\]
as required.

For terminal edges $(v,i)$ of $T$ such that $i\in[n]$, let $j,k\in
[n]$ be any two leaves of $T$ such that $v$ separates $i$, $j$, $k$. Set
%
\begin{equation}\label{eq:paramsq2}
(\eta_{v,i}^{0})^{2}=\frac{{{\rm Det} P^{ijk}}}{\mu_{jk}^{2}}.
\end{equation}
As in the previous case it is straightforward to check that, given
(C1), this value does not depend on the choice of $j,k$. For example,
if instead of $k$ we have $k'$ and $v$ separates $i,j,k'$ in $T$ then
there exists an edge split such that $\{i,j\}$ and $\{k,k'\}$ are in
different blocks. By (\ref{eq:mupom1}), we can show that
\[
\frac{{{\rm Det} P^{ijk}}}{\mu_{jk}^{2}}=\frac{\mu_{ik}{{\rm Det}
P^{ijk}}}{\mu_{ik'}\mu_{jk'}\mu_{jk}}=\frac{{{\rm Det} P^{ijk'}}}{\mu
_{jk'}^{2}}.
\]

For inner edges $(u,v)\in E$ let $i,j,k,l\in[n]$ be any four leaves
such that $u$ separates $i$ from $j$, $v$ separates $k$ from $l$ and $\{
u,v\}$ separates $\{i,j\}$ from $\{k,l\}$. Set
%
\begin{equation}\label{eq:inner_cor2}(\eta_{u,v}^{0})^{2}=\frac{\mu
_{il}^{2}}{\mu_{ij}^{2}}\frac{{\rm Det} P^{ijk}}{{\rm Det} \,P^{ikl}}
\end{equation}
which is well-defined since $\mu_{ij}^{2}$ and ${\rm Det} P^{ikl}$ are
strictly positive.
We now show that this value does not depend on the choice of $i,j,k,l$.
By symmetry it suffices to show that we obtain the same value if
instead of $l$ we took another leaf $l'$ such that $u,v$ are the only
degree three nodes in $T(ijkl')$. Since $v$ has degree three then there
must exist an inner edge separating $i,j,k$ from $l,l'$. From (C1) it
follows that
\[
\mu_{il'}\mu_{kl'}{\rm Det}P^{ikl}=\mu_{il}\mu_{kl}{\rm Det} P^{ikl'},
\quad\mu_{il}\mu_{kl'}=\mu_{il'}\mu_{kl}
\]
and hence
\[
\frac{\mu_{il}^{2}}{\mu_{ij}^{2}}\frac{{\rm Det} P^{ijk}}{{\rm Det} \,
P^{ikl}} = \frac{\mu_{il'}\mu_{kl'}}{\mu_{il'}\mu_{kl'}}\frac{\mu
_{il}^{2}}{\mu_{ij}^{2}}\frac{{\rm Det} P^{ijk}}{{\rm Det} \,P^{ikl}}
=\frac{\mu_{il'}^{2}}{\mu_{ij}^{2}}\frac{{\rm Det} P^{ijk}}{{\rm Det} \,
P^{ikl'}}
\]
as required.

We now show that with the choice of parameters satisfying (\ref
{eq:paramsq1}), (\ref{eq:paramsq2}) and (\ref{eq:inner_cor2}) the
modulus of equations in (\ref{eq:kappa_def_general}) hold. First
consider the case $I=\{i,j\}$. Label the inner nodes of $E(ij)$ by
$v_{1},\ldots, v_{k}$ beginning from the node adjacent to $i$. For each
$s=1,\ldots,k$ let $i_{s}$ denote a leaf such that $v_{s}$ separates
$i,j,i_{s}$ in $T$. By Remark \ref{rem:rootings}, we can choose any
rooting. We assume that the root $r(ij)$ of this path is in $v_{1}$. We
now proceed to check that
%
\begin{eqnarray}\label{eq:nistwo}
\mu_{ij}^{2}&=&\left(\frac{1}{4}(1-(\bar{\mu}^{0}_{r(ij)})^{2})\right
)^{2}\prod_{(u,v)\in E(ij)} (\eta_{u,v}^{0})^{2}\\
\nonumber&=& \left(\frac{1}{4}(1-(\bar{\mu}^{0}_{r(ij)})^{2})\right
)^{2}(\eta_{v_{1},u}^{0})^{2} \left(\prod_{s=2}^{k} (\eta
_{v_{s-1},v_{s}}^{0})^{2}\right) (\eta_{v_{k},v}^{0})^{2}.
\end{eqnarray}
Since $v_{1}$ separates $i,j,i_{1}$ by construction, from (\ref
{eq:paramsq1}) we therefore have
\[
\frac{1}{4}(1-(\bar{\mu}_{v_{1}}^{0})^{2})=\frac{\mu_{ij}\mu_{ii_{1}}\mu
_{ji_{1}}}{{\rm Det}(P^{iji_{1}})}.
\]
Now substitute this equation and all the set values in (\ref
{eq:paramsq2}), (\ref{eq:inner_cor2}) into the right hand side of (\ref
{eq:nistwo}). Use the fact that $v_{k}$ separates $i,j,i_{k}$ in $T$
and $i_{s-1},i_{s}$ are the only degree three nodes in
$T(ii_{s-1}ji_{s})$. Since $(v_{1},i)$ and $(v_{k},j)$ are the only
terminal edges we obtain
%
\begin{eqnarray}
&& \left(\frac{\mu_{ij}\mu_{ii_{1}}\mu_{j i_{1}}}{{\rm
Det}(P^{iji_{1}})}\right)^{2}\cdot\frac{{{\rm Det} P^{iji_{1}}}}{\mu
_{ji_{1}}^{2}}\cdot\left(\prod_{s=2}^{k} \frac{\mu_{ii_{s}}^{2}}{\mu
_{ii_{s-1}}^{2}}{\frac{{\rm Det} P^{iji_{s-1}}}{{\rm Det} \,
P^{iji_{s}}}}\right)\cdot\frac{{{\rm Det}
P^{iji_{k}}}}{\mu_{ji_{k}}^{2}}\quad
\end{eqnarray}
It can now be checked that all the expressions with hyperdeterminants
cancel out and the formula reduces to $\mu_{ij}^{2}$ as required.

Now we need to show that for every $I=\{i,j,k\}$
%
\begin{equation}\label{eq:pompompom3}
\mu_{ijk}^{2}=\left(\frac{1}{4}(1-\bar{\mu}^{0}_{r(ijk)})^{2}) \right
)^{2}(\bar{\mu}_{w}^{0})^{2}\prod_{(u,v)\in E(ijk)} (\eta_{u,v}^{0})^{2},
\end{equation}
where by $w$ we denote the node separating $i$, $j$ and $k$. Assume
that $T(ijk)$ is rooted somewhere on the path between $i$ and $j$.
Using (\ref{eq:nistwo}) the right hand side of (\ref{eq:pompompom3})
can be rewritten as
%
\begin{equation}\label{eq:pompompom4}
\mu_{ij}^{2}(\bar{\mu}_{w}^{0})^{2}\prod_{(u,v)\in E(wk)}(\eta_{u,v}^{0})^{2}.
\end{equation}
Number the degree three nodes in $E(wk)$ by $v_{1},\ldots, v_{l}$ and
let $i_{s}$ denote a leaf such that the inner nodes of $T(ijk i_{s})$
of degree three are exactly $v_{s-1}$ and $v_{s}$, where $v_{0}=w$. By
an exactly analogous argument as in the case above we obtain
%
\begin{eqnarray}\label{eq:bigexpmuijk}
&&\nonumber\prod_{(u,v)\in E(wk)}(\eta_{u,v}^{0})^{2}\\
&&\qquad = \frac{\mu_{i i_{1}}^{2}}{\mu_{ij}^{2}}{\frac
{{\rm Det} P^{ijk}}{{\rm Det} P^{iki_{1}}}}\cdot\left(\prod
_{s=2}^{l}\frac{\mu_{i_{s-1} i_{s}}^{2}}{\mu_{i_{s-2}i_{s-1}}^{2}}{\frac
{{\rm Det} P^{i_{s-2}i_{s-1}k}}{{\rm Det} P^{i_{s-1}i_{s}k}}}\right)
\frac{{{\rm Det} P^{i_{l-1}i_{l}k}}}{\mu_{i_{l-1}i_{l}}^{2}},\qquad
\end{eqnarray}
where $i_{0}=i$. It can be easily checked that all the
hyperdeterminants apart from the term ${{\rm Det} P^{ijk}}$ cancel out.
Moreover, all the covariances apart from the term $\mu_{ij}^{-2}$
cancel out as well. Hence (\ref{eq:bigexpmuijk}) is equal to $\frac
{{{\rm Det} P^{ijk}}}{\mu_{ij}^{2}}$. Now, by using the definition of
$(\bar{\mu}^{0}_{w})^{2}$ in (\ref{eq:paramsq1}), it can be easily
checked that (\ref{eq:pompompom4}) is equal to $\mu_{ijk}^{2}$ as required.

So far we have confirmed only that the squares of parameters in $\omega
_{0}$ satisfy required equations at least for the tree cumulants up to
the third order. Next, we show that there exists a consistent choice of
signs for these parameters such that the equations are satisfied
exactly. Let $\sigma(i,j)={\rm sgn}(\mu_{ij})$. Since by assumption $\mu
_{ij}\neq0$ for all $i,j\in[n]$ then the conditions in (C2) imply that
$\sigma(i,j)\sigma(i,k)\sigma(j,k)=1$ for all triples $i,j,k\in[n]$.
Hence by Lemma \ref{lem:inequalities2} there exists a choice
$s_{0}(u,v)\in\{-1,+1\}$ for all $(u,v)\in E$ such that $\sigma
(i,j)=\prod_{(u,v)\in E(ij)} s_{0}(u,v)$ for all $i,j\in[n]$. For any
two nodes $k,l\in V$ we define $s(k,l)=\prod_{(u,v)\in
E(kl)}s_{0}(u,v)$. A choice of signs for the parameters can be obtained
as follows: For each edge $(u,v)\in E$ we set ${\rm sgn}(\eta
_{u,v}^{0})=s_{0}(u,v)$ and, for each inner node $v$, set ${\rm
sgn}(\bar{\mu}_{v}^{0})={\rm sgn}(\mu_{ijk})s(v,i)s(v,j)s(v,k)$ where
$i,j,k$ are any three leaves of $T$ separated by $v$.

Assume now that the choice of the signs of the parameters, induced by
$s_{0}(u,v)$ for $(u,v)\in E$, has been made. This choice of signs gives
%
\begin{equation}\label{eq:param1}
\bar{\mu}_{v}^{0}=s(v,i)s(v,j)s(v,k)\frac{\mu_{ijk}}{\sqrt{{{\rm Det}
P^{ijk}}}},
\end{equation}
%
\begin{equation}\label{eq:param2}
\eta_{v,i}^{0}=s(v,i)\frac{\sqrt{{{\rm Det} P^{ijk}}}}{|\mu_{jk}|},
\end{equation}
%
\begin{equation}\label{eq:param3}
\eta_{u,v}^{0}=s_{0}(u,v)\left|\frac{\mu_{il}}{\mu_{ij}}\right|\sqrt
{\frac{{\rm Det} P^{ijk}}{{\rm Det} \,P^{ikl}}}.
\end{equation}
Note that, in particular, with this choice of signs ${\rm sgn}(\eta
_{u,v}^{0})=s_{0}(u,v)$ for all $(u,v)\in E$ and ${\rm sgn}(\bar{\mu
}_{v}^{0})={\rm sgn}(\mu_{ijk})\prod_{(u,v)\in E(ijk)}s_{0}(u,v)$.
Since (\ref{eq:nistwo}) holds, it follows that
\[
|\mu_{ij}|=\frac{1}{4}(1-(\bar{\mu}^{0}_{r(ij)})^{2})\prod_{(u,v)\in
E(ij)} |\eta_{u,v}^{0}|.
\]
Now multiply both sides by $s(i,j)=\prod_{(u,v)\in E(ij)}s_{0}(u,v)$ to get
%
\begin{eqnarray}
\nonumber\mu_{ij}=s(i,j)|\mu_{ij}|&=&\frac{1}{4}(1-(\bar{\mu
}^{0}_{r(ij)})^{2})\prod_{(u,v)\in E(ij)} s_{0}(u,v)|\eta_{u,v}^{0}|\\
&=&\frac{1}{4}(1-(\bar{\mu}^{0}_{r(ij)})^{2})\prod_{(u,v)\in E(ij)} \eta
_{u,v}^{0}.
\end{eqnarray}
Similarly, from (\ref{eq:pompompom3}), we have that
\begin{equation*}
|\mu_{ijk}|=\frac{1}{4}(1-(\bar{\mu}^{0}_{r(ijk)})^{2})|\bar{\mu
}_{w}^{0}|\prod_{(u,v)\in E(ijk)} |\eta_{u,v}^{0}|.
\end{equation*}
Multiply both sides by ${\rm sgn}(\mu_{ijk})$ and use the fact that
$(\prod_{(u,v)\in E(ijk)} s_{0}(u,v))^{2}=1$ to get
\begin{eqnarray*}
\mu_{ijk}&=&\frac{1}{4}(1-(\bar{\mu}^{0}_{r(ijk)})^{2})\left(|\bar{\mu
}_{w}^{0}|\,{\rm sgn}(\mu_{ijk})\prod_{(u,v)\in E(ijk)}
s_{0}(u,v)\right)\\
&&{} \cdot\prod_{(u,v)\in E(ijk)} s_{0}(u,v)|\eta_{u,v}^{0}|\\
&=&\frac{1}{4}(1-(\bar{\mu}^{0}_{r(ijk)})^{2}) \bar{\mu}_{w}^{0}\prod
_{(u,v)\in E(ijk)} \eta_{u,v}^{0}
\end{eqnarray*}
as desired.

We now show (\ref{eq:kappa_def_general}) for $|I|\geq4$ by induction.
Let $(u,v)\in E$ be any edge splitting $I$ into two subsets $I_{1}$ and
$I_{2}$ such that $|I_{1}|,|I_{2}|\geq2$ and $u$ is the node closer to
$I_{1}$. Let $i\in I_{1}$ and $j\in I_{2}$ then, by (C1),
\[
\kappa_{I_{1}I_{2}}=\frac{\kappa_{I_{1}j}\kappa_{i I_{2}}}{\kappa_{ij}}.
\]
By induction we can assume that $\kappa_{I_{1}j}$, $\kappa_{i I_{2}}$
and $\kappa_{ij}$ have form as in (\ref{eq:kappa_def_general}). Moreover,
\[
\frac{\prod_{(u,v)\in E(iI_{2})}\eta_{u,v}\prod_{(u,v)\in E(I_{1}j)}\eta
_{u,v}}{\prod_{(u,v)\in E(ij)}\eta_{u,v}}=\prod_{(u,v)\in E(I)}\eta_{u,v},
\]
\[
\prod_{h\in N(i I_{2})}\bar{\mu}_{h}^{\deg h -2}=\prod_{h\in N(v
I_{2})}\bar{\mu}_{h}^{\deg h -2},
\]
\[
\prod_{h\in N(I_{1}j)}\bar{\mu}_{h}^{\deg h -2}=\prod_{h\in N(
I_{1}u)}\bar{\mu}_{h}^{\deg h -2}.
\]
Using this we can write
%
\begin{equation}\label{eq:pompom5}
\kappa_{I_{1}I_{2}}=\frac{1}{4}\frac{(1-\bar{\mu
}_{r(iI_{2})}^{2})(1-\bar{\mu}_{r(I_{1}j)}^{2})}{(1-\bar{\mu
}_{r(ij)}^{2})}\prod_{h\in N(I)}\bar{\mu}_{h}^{\deg h -2}\prod
_{(u,v)\in E(I)}\eta_{u,v}.
\end{equation}
The root of $T(I)$ is either in $T(I_{1}u)$ or in $T(v I_{2})$. In the
first case $r(I_{1}j)=r(I)$ and $r(i I_{2})=r(ij)$. In the second case
$r(I_{1}j)=r(ij)$ and $r(i I_{2})=r(I)$. Hence in both cases
\[
\frac{(1-\bar{\mu}_{r(iI_{2})}^{2})(1-\bar{\mu
}_{r(I_{1}j)}^{2})}{(1-\bar{\mu}_{r(ij)}^{2})}=(1-\bar{\mu}_{r(I)}^{2})
\]
and (\ref{eq:pompom5}) has the required form given by (\ref
{eq:muinkappa}). It follows that $K=\psi_{T}(\omega_{0})$.

It now remains to show that the parameters defined in (\ref
{eq:param1}), (\ref{eq:param2}) and (\ref{eq:param3}) define a
parameter vector $\omega_{0}$ which lies in $\Omega_{T}$. Since, by
(C2), $\mu_{ijk}^{2}\leq{\rm Det} P^{ijk}$ for all $i,j,k\in[n]$ for
all inner nodes $h$ we have $\bar{\mu}_{h}^{0}\in[-1,1]$ as required.
For a terminal edge $(v,i)$ consider the marginal model induced by
$T(ijk)$, where $j,k$ are any two leaves such that $v$ separates
$i,j,k$ in $T$. From Proposition \ref{lem:semi_tripod} constraints (C2)
and (C3) imply that $\eta_{v,i}$ is a valid parameter. To show that
(\ref{eq:param3}) satisfies (\ref{eq:constraints}) write
\[
(1\pm\bar{\mu}_{u}^{0})\eta_{u,v}^{0}=\left(1\pm s(u,i)s(u,j)s(u,k)\frac
{\mu_{ijk}}{\sqrt{{\rm Det }P^{ijk}}}\right)s(u,v)\left|\frac{\mu
_{il}}{\mu_{ij}}\right|\sqrt{\frac{{\rm Det }P^{ijk}}{{\rm Det }P^{ikl}}}.
\]
Now substitute this together with the expressions for $\bar{\mu
}_{u}^{0}$ and $\bar{\mu}_{v}^{0}$, given by (\ref{eq:param1}), into
(\ref{eq:constraints}). First assume $s(u,v)=1$. Then $s(u,k)=s(v,k)$,
$s(v,i)=s(u,i)$ and (\ref{eq:constraints}) becomes
\[
\left(\sqrt{{\rm Det }P^{ijk}}\pm s(u,i)\mu_{ijk}\right)\left|\frac{\mu
_{il}}{\mu_{ij}}\right|\leq\left(\sqrt{{\rm Det }P^{ikl}}\pm s(v,l)\mu
_{ikl}\right).
\]
By multiplying both sides by a positive expression $|\mu_{jl}|(\sqrt
{{\rm Det }P^{ijk}}\mp s(u,i)\mu_{ijk})$ we obtain
\[
{4\mu_{ik}^{2}\mu_{jl}^{2}}\leq\left(\sqrt{{\rm Det }P^{ijk}}\pm
s(u,l)\mu_{jl}{\mu_{ijk}}\right)\left(\sqrt{{\rm Det }P^{ikl}}\mp
s(v,l){\mu_{ikl}}\right).
\]
However, $s(u,l)=s(v,l)$ hence this is satisfied by (C5). It is easily
calculated that the case $s(u,v)=-1$ leads to the same constraint. This
finishes the proof of Lemma \ref{lem:case1}.
\end{proof}

\begin{lem}\label{lem:case2}
The inclusion $\mathcal{M}\subseteq\mathcal{M}^{\kappa}_{T}$ holds.
\end{lem}
\begin{proof}
Let ${K}\in\mathcal{M}$ be a tree cumulant and let ${\Sigma}=[{\mu
}_{ij}]\in\mathbb{R}^{n\times n}$ be the matrix of all covariances
between the leaves. We say that an edge $e\in E$ is \textit{isolated
relative to} ${K}$ if ${\mu}_{ij}=0$ for all $i,j\in[n]$ such that
$e\in E(ij)$. By $\widehat{E}\subseteq E$ we denote the set of all
edges of $T$ which are isolated relative to ${K}$. By $\widehat
{T}=(V,E\setminus\widehat{E})$ we denote the forest obtained from $T$
by removing edges in $\widehat{E}$ and we call it the $K$-forest. We
define relations on $\widehat{E}$ and $E\setminus\widehat{E}$. For two
edges $e,e'$ with either $\{e,e'\}\subset\widehat{E}$ or $\{e,e'\}
\subset E\setminus\widehat{E}$ write $e\sim e'$ if either $e=e'$ or $e$
and $e'$ are adjacent and all the edges that are incident with both $e$
and $e'$ are isolated relative to $K$. Let us now take the transitive
closure of $\sim$ restricted to pairs of edges in $\widehat{E}$ to form
an equivalence relation on $\widehat{E}$. This transitive closure is
constructed as follows. Consider a graph with nodes representing
elements of $\widehat{E}$ and put an edge between $e,e'$ whenever
$e\sim e'$. Then the equivalence classes correspond to connected
components of this graph. Similarly, take the transitive closure of
$\sim$ restricted to the pairs of edges in $E\setminus\widehat{E}$ to
form an equivalence relation in $E\setminus\widehat{E}$. We will let
$[\widehat{E}]$ and $[E\setminus\widehat{E}]$ denote the set of
equivalence classes of $\widehat{E}$ and $E\setminus\widehat{E}$
respectively (for details see \cite[Section 5]{pwz-2010-identifiability}).

Again we show that there exists $\omega_{0}\in\Omega_{T}$ such that
$\psi_{T}(\omega_{0})=K$. Set $\eta_{u,v}^{0}=0$ for all $(u,v)\in
\widehat{E}$ and $\bar{\mu}_{v}^{0}=0$ for all inner nodes of $T$ with
degree zero in $\widehat{T}$. It then follows that $(1\pm\bar{\mu
}_{u})\eta_{u,v}=0$ satisfies (\ref{eq:constraints}) for all $(u,v)\in
\widehat{E}$ and $\bar{\mu}_{v}^{0}\in[-1,1]$ for all $v\in\widehat{V}$
and hence these parameters satisfy constraints defining $\Omega_{T}$.
If $I\subseteq[n]$ is such that $E(I)\cap\widehat{E}\neq\emptyset$
then $\kappa_{I}=0$ by (C3). Hence in this case we can assert that
\[
\kappa_{I}=\frac{1}{4}(1-(\bar{\mu}_{r(I)}^{0})^{2})\prod_{v\in
N(I)}(\bar{\mu}_{v}^{0})^{\deg(v)-2}\prod_{(u,v)\in E({I})}\eta_{u,v}^{0}
\]
simply because both sides of this equation are zero. By \cite[Remark
5.2 (iv)]{pwz-2010-identifiability} every connected component of
$\widehat{T}$ is a subtree which is either an inner node or a tree with
the set of leaves contained in $[n]$. Denote the connected subtrees
which are not inner nodes by $T_{1},\ldots, T_{k}$ and their sets of
leaves by $[n_{l}]$ for $l=1,\ldots,k$. For every $l=1,\ldots, k$ and
all $i,j\in[n_{l}]$ we have that $\mu_{ij}\neq0$. Hence for each
$T_{l}$ applying Lemma \ref{lem:case1} we have $K^{[n_{l}]}\in\mathcal
{M}_{T_{l}}$. If $I\subseteq[n]$ is such that $E(I)\cap\widehat{E}=
\emptyset$ then $I\subseteq[n_{l}]$ for some $l=1,\ldots, k$. Since
$K^{[n_{l}]}\in\mathcal{M}_{T_{l}}$ then there exists a choice of
parameters such that $\kappa_{I}$ can be written as (\ref
{eq:kappa_def_general}). Therefore $K\in\mathcal{M}_{T}$ and we are done.
\end{proof}

The proof that $\mathcal{M}=\mathcal{M}_{T}^{\kappa}$ follows from
Lemma \ref{lem:inclusion1} and Lemma \ref{lem:case2}. It suffices to
show that, given that all covariances are non-zero, the only
constraints of $\mathcal{M}$ involving only second order moments are
(\ref{eq:suffic-ineq2}). In the formulation of the main result the only
such constraints are all the equations in (C1) involving only
covariances and the positivity constraints in (C2). By the four-point
condition (c.f. (\ref{eq:suffic-ineq1})) the inequalities
\[
\min\left\{\left(\frac{\mu_{ik}\mu_{jl}}{\mu_{ij}\mu_{kl}}\right
)^{2},\left(\frac{\mu_{il}\mu_{jk}}{\mu_{ij}\mu_{kl}} \right)^{2}\right
\}\leq1
\]
for all not necessarily distinct $i,j,k,l\in[n]$ uniquely define the
underlying tree metric and hence they are equivalent to all the
equations in (C1) involving only second order moments. The inequalities
\[
\min\left\{\frac{\mu_{ik}\mu_{jl}}{\mu_{ij}\mu_{kl}},\frac{\mu_{il}\mu
_{jk}}{\mu_{ij}\mu_{kl}} \right\}\geq0
\]
are equivalent to $\mu_{ij}\mu_{ik}\mu_{jk}\geq0$ for all $i,j,k\in
[n]$. However, the two above sets of inequalities are exactly
equivalent to (\ref{eq:suffic-ineq2}). \hfill$\square$

%
%
%

\section{Phylogenetic invariants}\label{sec:invariants}

In a seminal paper Allman and Rhodes \cite{allman2008pia} identified
equations defining the general Markov $\mathcal{M}_{T}$ in the case
when $T$ is a trivalent tree. In this section we relate their results
to ours. To introduce their main theorem we need the following definition.
\begin{defn}\label{def:flat}
Let $X=(X_1,\ldots,X_n)$ be a vector of binary random variables and let
$P=(p_\gamma)_{\gamma\in\{0,1\}^n}$ be a $2\times\ldots\times2$ table
of the joint distribution of $X$. Let $A|B$ form a split of $[n]$. Then
the \textit{flattening} of $P$ induced by the split is a matrix
\[
P_{A|B}=[p_{\alpha\beta}], \quad\alpha\in\{0,1\}^{|A|},\beta=\{0,1\}^{|B|},
\]
where $p_{\alpha\beta}=\mathbb{P}(X_A=\alpha,X_B=\beta)$. Let $T=(V,E)$
be a tree. In particular, for edge partitions the induced flattening is
called an \textit{edge flattening} and we denote it by $P_e$, where
$e\in E$ is the edge inducing the split.
\end{defn}

Note that whenever we implicitly use some order on coordinates indexed
by $\{0,1\}$-sequences we always mean the order induced by the
lexicographic order on $\{0,1\}$-sequences such that ${0\cdots
00}>0\cdots01>\ldots>1\cdots11$. This gives in particular the
ordering of rows and columns of flattenings.
\begin{thm}[Allman, Rhodes \cite{allman2008pia}]\label{th:minors}
Let $T^{r}$ be a trivalent tree rooted in $r$ and $\mathcal{M}_{T}$ be
the general Markov model on $T^{r}$ as defined by (\ref{eq:p_albar2}).
Then the smallest algebraic variety, i.e. a subset of a real space
defined by a finite set of polynomial equations, containing the general
Markov model, is defined by vanishing of all $3\times3$-minors of all
the edge flattenings of $T^{r}$ together with the trivial polynomial
equation $\sum_\alpha p_\alpha=1$.
\end{thm}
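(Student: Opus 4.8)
\textbf{The plan} is to prove the two set inclusions $\overline{\cM_{T}}\subseteq W$ and $W\subseteq\overline{\cM_{T}}$, where $\overline{\cM_{T}}$ is the Zariski closure of $\cM_{T}$ and $W$ is the common zero locus of all $3\times 3$ minors of all edge flattenings and of $\sum_{\a}p_{\a}-1$; this is exactly the assertion that $W$ is the smallest variety containing $\cM_{T}$. The inclusion $\overline{\cM_{T}}\subseteq W$ is the soft one. Fix an edge $e$ with induced split $A|B$ and let $w$ be the endpoint of $e$ on the $A$ side. By the global Markov property on $T$ (Remark \ref{rem:rootings}) we have $X_{A}\indep X_{B}\mid Y_{w}$, so for $P\in\cM_{T}$,
$$
p_{\a\beta}=\sum_{s\in\{0,1\}}\P(Y_{w}=s)\,\P(X_{A}=\a\mid Y_{w}=s)\,\P(X_{B}=\beta\mid Y_{w}=s),
$$
i.e. $P_{e}=M_{A}^{\top}\Lambda M_{B}$ with $\Lambda$ a $2\times 2$ diagonal matrix and $M_{A},M_{B}$ of formats $2\times 2^{|A|}$ and $2\times 2^{|B|}$. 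Hence $\rank P_{e}\leq 2$, so the $3\times 3$ minors of $P_{e}$ — and $\sum p_{\a}-1$ — are polynomials vanishing on $\cM_{T}$, hence on $\overline{\cM_{T}}$.

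For the reverse inclusion I would transport everything through the polynomial isomorphism $f_{p\kappa}$ of diagram (\ref{eq:diagram}): $\overline{\cM_{T}}$ corresponds to $\overline{\cM_{T}^{\kappa}}\subseteq\cK_{T}$, and it suffices to (i) identify the image of $W$ with the variety $V(\mathrm{C1})$ cut out in $\cK_{T}$ by the binomial relations of condition (C1) of Theorem \ref{th:parameters0}, and then (ii) show $V(\mathrm{C1})=\overline{\cM_{T}^{\kappa}}$. For (i) the key claim, proved one split at a time, is: for an edge $e$ with split $A|B$, $\rank P_{e}\leq 2$ if and only if the tree cumulant matrix $K_{e}:=[\kappa_{IJ}]_{\emptyset\neq I\subseteq A,\ \emptyset\neq J\subseteq B}$ has $\rank K_{e}\leq 1$. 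The point is that $f_{p\lambda}$ of Appendix \ref{app:change} is linear with a tensor product structure compatible with the split, so it sends $P_{e}$ to the non-central moment flattening, still of rank $\leq 2$, but with the entry indexed by $(\emptyset,\emptyset)$ and the ``first row and column'' fixed by $\lambda_{\emptyset}=1$ and the $\lambda_{e_{i}}$; the centering map $f_{\lambda\mu}$ and the M\"{o}bius map $f_{\mu\kappa}$ then exploit the normalisations $\mu_{e_{i}}=0$, $\mu_{\emptyset}=1$ to remove exactly one more unit of rank, so that rank $\leq 2$ of the flattening becomes rank $\leq 1$ of $K_{e}$. On the model this rank one structure is also visible directly: factoring (\ref{eq:kappa_def_general}) across $e$ gives $\kappa_{IJ}=\tfrac14(1-\bar{\mu}_{r(IJ)}^{2})\,a_{I}b_{J}$ for suitable $a_{I},b_{J}$, and conversely a rank one $K_{e}$ lets one read off the edge parameters attached to $e$.

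It then remains to prove (ii). By Theorem \ref{th:parameters0} the equations (C1) vanish on $\cM_{T}^{\kappa}$, so $\overline{\cM_{T}^{\kappa}}\subseteq V(\mathrm{C1})$; moreover $\overline{\cM_{T}^{\kappa}}$ is irreducible of dimension $4n-5$, being the closure of the image of the irreducible $\Omega_{T}$ under the map $\psi_{T}$ of Proposition \ref{prop:monomial}, whose generic fibres are finite (the $2^{n-2}$ state relabellings at the hidden nodes). So it is enough to show that $V(\mathrm{C1})$ is itself irreducible of dimension $4n-5$, since an irreducible set coincides with any irreducible set of equal dimension containing it. I would do this by induction on $n$, peeling off a cherry $\{i,j\}$ with parent $v$ and third neighbour $u$: the rank one condition on $K_{(u,v)}$ forces $\kappa_{\{i\}J}=c_{i}\phi_{J}$, $\kappa_{\{j\}J}=c_{j}\phi_{J}$, $\kappa_{\{i,j\}\cup J}=c_{ij}\phi_{J}$ for all $J\subseteq[n]\setminus\{i,j\}$, which presents $V(\mathrm{C1})$ for $T$ as an irreducible bundle with fibres of the expected dimension over $V(\mathrm{C1})$ for the trivalent $(n-1)$-leaf tree on $\{v\}\cup([n]\setminus\{i,j\})$ (using Lemma 2.1 and Corollary 2.2 of \cite{pwz2010-identifiability}); the base case $n=3$ is the tripod, for which Lemma \ref{lem:semi_tripod} gives the claim. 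Transporting back by $f_{\kappa p}$ then yields $W=\overline{\cM_{T}}$.

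\emph{The hard part} will be the key claim of the second paragraph. The change of coordinates $f_{p\kappa}$ is genuinely nonlinear (the centering and M\"{o}bius steps), so the passage from ``$\rank P_{e}\leq 2$'' to ``$\rank K_{e}\leq 1$'' is not formal: one must verify that the coordinate change, while certainly not preserving the rank of arbitrary matrices, does preserve it for flattenings along a fixed split, and — the subtler point — that $V(\mathrm{C1})$ acquires no spurious components over the locus where some covariances $\mu_{ij}$ vanish, where condition (C4) and the $K$-forest decomposition used in the proof of Theorem \ref{th:parameters0} are exactly what keep the set inside the closure. A secondary obstacle is the irreducibility and dimension count for $V(\mathrm{C1})$; this is essentially the combinatorial (toric) heart of the Allman--Rhodes argument and, if one prefers, may instead be imported from \cite{allman2008pia}.
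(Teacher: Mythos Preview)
This theorem is not proved in the paper at all: it is quoted from Allman and Rhodes \cite{allman2008pia} and used as an input. What the paper \emph{does} prove is precisely your ``key claim'' of step (i) --- that $\rank P_{e}\leq 2$ is equivalent to $\rank N_{e}\leq 1$ --- as Proposition~\ref{prop:flat}, by writing the change of coordinates split-by-split in the form $M_{e}=U\,D_{e}\,V$ with $U,V$ unitriangular (so rank-preserving) and $D_{e}$ block-diagonal with blocks $1$ and $N_{e}$. Your sketch of this step is along the same lines and essentially correct; the paper's proof is cleaner because it works directly with the M\"{o}bius inversion (\ref{eq:muinkappa}) rather than arguing informally that centering ``removes one unit of rank''.

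The real gap in your proposal is step (ii), the irreducibility and dimension of $V(\mathrm{C1})$. Your cherry-peeling induction is a reasonable plan, but it is not carried out: after writing $\kappa_{\{i\}J}=c_{i}\phi_{J}$, $\kappa_{\{j\}J}=c_{j}\phi_{J}$, $\kappa_{\{i,j\}J}=c_{ij}\phi_{J}$, you still have to show that the remaining (C1) relations on the big tree force exactly the (C1) relations of the contracted tree on the vector $(\phi_{J})$, and --- the delicate point you yourself flag --- that no extra irreducible components appear over the locus where some $\phi_{J}$ (equivalently some $\mu_{ij}$) vanish. You do not do this; your fallback is to import the result from \cite{allman2008pia}, which is circular since that \emph{is} the theorem. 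The paper makes no attempt to close this gap either: it takes Theorem~\ref{th:minors} as given and only shows, via Proposition~\ref{prop:flat} and Corollary~\ref{prop:equations}, that the Allman--Rhodes equations and the (C1) equations cut out the same variety. So your outline is a faithful translation of the statement into tree-cumulant coordinates, plus the paper's Proposition~\ref{prop:flat}, plus an unproved irreducibility claim that is equivalent in difficulty to the original theorem.
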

Note that the result includes the case of the tripod tree model since
in this case each edge flattening of the joint probability table is a
$2\times4$ table so there are no $3\times3$ minors and hence there
are no non-trivial polynomials vanishing on the model.

Just as we defined edge flattenings of probability tables we can also
define edge flattenings of $(\kappa_I)_{I\subseteq[n]}$ where $\kappa
_{\emptyset}=1$ and $\kappa_{i}=0$ for all $i\in[n]$ (c.f. Appendix
\ref{app:change}). Let $e$ be an edge of $T$ inducing a split $A|B\in\Pi
_T$ such that $|A|=r$, $|B|=n-r$. Then $\widehat{N}_e$ is a $2^r\times
2^{n-r}$ matrix such that for any two subsets $I\subseteq A$,
$J\subseteq B$ the element of $\widehat{N}_e$ corresponding to the
$I$-th row and the $J$-th column is $\kappa_{IJ}$. Let $N_{e}$ denote
its submatrix given by removing the column and the row corresponding to
empty subsets of $A$ and $B$. Here the labeling for the rows and
columns is induced by the ordering of the rows and columns for $P_e$
(c.f. Definition \ref{def:flat}), i.e. all the subsets of $A$ and $B$
are coded as $\{0,1\}$-vectors and we introduce the lexicographic order
on the vectors with the vector of ones being the last one.

The following result allows us to rephrase the equations in Theorem \ref
{th:minors} in terms of our new coordinates.
\begin{prop}\label{prop:flat}
Let $T=(V,E)$ be a tree and let $P$ be a probability distribution of a
vector $X=(X_1,\ldots, X_n)$ of binary variables represented by the
leaves of $T$. If $e\in E$ is an edge of $T$ inducing a split
$A_{1}|A_{2}$ then ${\rm rank} (P_e)=2$ if and only if ${\rm rank}(N_e)=1$.
\end{prop}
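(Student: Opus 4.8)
The plan is to follow the composite change of coordinates $f_{p\kappa}=f_{\mu\kappa}\circ f_{\lambda\mu}\circ f_{p\lambda}$ through its effect on the flattening along the split $A_1|A_2$. The two maps $f_{p\lambda}$ and $f_{\lambda\mu}$ act on this flattening by left and right multiplication by invertible matrices, so they preserve its rank exactly; the third, M\"obius, step is nonlinear, and the heart of the argument is a combinatorial identity showing that this step, once the contribution of the empty row and column has been split off, also preserves the rank of what remains.

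First I would reduce to central moments. Index the rows of $P_e$ by subsets $I\subseteq A_1$ and its columns by subsets $J\subseteq A_2$ (identifying a subset with its $0/1$ indicator). By (\ref{eq:lambda_in_p}) the non-central moment flattening is $Z_1P_eZ_2^{\top}$ for the invertible triangular zeta matrices $Z_1,Z_2$, and, for a fixed distribution and hence fixed means, (\ref{eq:central}) reads $\mu_{IJ}=\sum_{K\subseteq I,\,L\subseteq J}c(I,K)\,c(J,L)\,\lambda_{KL}$ with $c(I,I)=1$, so the central moment flattening $M_e:=[\mu_{IJ}]_{I\subseteq A_1,\,J\subseteq A_2}$ equals $D_1Z_1P_eZ_2^{\top}D_2^{\top}$ with $D_1,D_2$ unitriangular with respect to inclusion. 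Hence $\rank(P_e)=\rank(M_e)$. Since $\mu_{\emptyset\emptyset}=1$, $\mu_{\emptyset J}=\mu_J$ and $\mu_{I\emptyset}=\mu_I$, using the $\emptyset$-row and the $\emptyset$-column to clear $M_e$ by elementary operations reduces it, up to row and column operations, to the direct sum of the $1\times 1$ block $(1)$ and the block $N'_e=[\mu_{IJ}-\mu_I\mu_J]$ over \emph{nonempty} $I\subseteq A_1$, $J\subseteq A_2$. Thus $\rank(P_e)=1+\rank(N'_e)$.

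It then remains to prove that $\rank(N'_e)=\rank(N_e)$, which together with the above gives $\rank(P_e)=1+\rank(N_e)$ and hence the proposition (indeed for every rank, not just $2$ versus $1$). Fix nonempty $I\subseteq A_1$, $J\subseteq A_2$ and split $\mu_{IJ}=\sum_{\pi\in\Pi_{T(IJ)}}\prod_{B\in\pi}\kappa_B$ according to whether $\pi$ \emph{isolates} $e$, i.e.\ whether no block of $\pi$ contains leaves from both sides of $e$. Because $e$ is the unique edge of $T$ separating $A_1$ from $A_2$, a path inside a block joining an $A_1$-leaf to an $A_2$-leaf must traverse $e$ and hence its endpoints $w,w'$, so if $\pi$ does not isolate $e$ there is a \emph{unique} block $B_0$ straddling $e$. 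The partitions isolating $e$ are precisely those of the form $\tau\sqcup\sigma$ with $\tau\in\Pi_{T(I)}$ and $\sigma\in\Pi_{T(J)}$ (restricting removed edges to the two sides of $e$ yields $\tau$ and $\sigma$, whose partition posets for the leaves of $I$, resp.\ $J$, coincide with those of $T(I)$, resp.\ $T(J)$), and by (\ref{eq:muinkappa}) their total contribution is exactly $\mu_I\mu_J$. For the remaining partitions, writing $I_0=B_0\cap A_1$ and $J_0=B_0\cap A_2$ and grouping accordingly,
$$
N'_e[I,J]=\mu_{IJ}-\mu_I\mu_J=\sum_{\emptyset\neq I_0\subseteq I}\ \sum_{\emptyset\neq J_0\subseteq J}\alpha(I,I_0)\,\kappa_{I_0J_0}\,\beta(J,J_0),
$$
where $\alpha(I,I_0)$ is the sum of $\prod_B\kappa_B$ over the admissible partitions of $I\setminus I_0$ produced on the $A_1$-side and $\beta(J,J_0)$ the analogous sum on the $A_2$-side; these two are independent of one another because the leaf-sets of the components of $T(IJ)$ minus the removed edges on one side of $e$ depend only on the edges removed on that side, and one has $\alpha(I,I)=\beta(J,J)=1$. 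In matrix form $N'_e=G_AN_eG_B^{\top}$ with $G_A=[\alpha(I,I_0)]$ and $G_B=[\beta(J,J_0)]$, both triangular with respect to inclusion and with unit diagonal, hence invertible. Therefore $\rank(N'_e)=\rank(N_e)$, and the proof is complete.

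The delicate point is the combinatorial bookkeeping in the last step: one must identify the $w$-side component of $T(IJ)$ after deleting $e$ with $T(I\cup\{w\})$, check that removing its edges realizes exactly the partitions of $I$ occurring in $\Pi_{T(I)}$ (the pendant path to $w$ contributing nothing to the partition of $I$), and deduce that the coefficients $\alpha(I,I_0)$ are well-defined functions of $I$ and $I_0$ alone, equal to $1$ when $I_0=I$; symmetrically on the $A_2$-side. Once this is set up, the factorization $N'_e=G_AN_eG_B^{\top}$ and the invertibility of $G_A,G_B$ are immediate, as is the consequence $\rank(P_e)=1+\rank(N_e)$ — which in particular makes ``$\rank(P_e)=1$'' equivalent to ``$N_e=0$'', so that the stated dichotomy is a genuine ``$\rank(N_e)=1$'' and not merely ``$\rank(N_e)\le 1$''.
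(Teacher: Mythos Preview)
Your argument is correct and follows essentially the same route as the paper's: both pass from $P_e$ to the central-moment flattening $M_e$ by invertible linear maps, and then exhibit $M_e$ as a unitriangular conjugate of the block-diagonal matrix $1\oplus N_e$, giving $\rank P_e=1+\rank N_e$. The paper does this in a single step, writing $M_e=UD_eV^{\top}$ with the explicit choice $u_{II'}=\sum_{\pi\in\Pi_{T(I\setminus I')}}\prod_{B\in\pi}\kappa_B$; you instead first clear the $\emptyset$-row and column to reach $1\oplus N'_e$ with $N'_e=[\mu_{IJ}-\mu_I\mu_J]$, and then factor $N'_e=G_A N_e G_B^{\top}$. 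These two presentations are equivalent: restricting the paper's $U,V$ to nonempty indices gives exactly your $G_A,G_B$, while the $\emptyset$-entries $u_{I\emptyset}=\mu_I$, $v_{\emptyset J}=\mu_J$ account for the $\mu_I\mu_J$ you subtract off.

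One point where your version is arguably more careful: you define $\alpha(I,I_0)$ intrinsically as the sum over the partitions of $I\setminus I_0$ that \emph{actually occur} on the $A_1$-side once the straddling block is fixed, and then observe that this depends only on $(I,I_0)$ because the $A_1$-side of $T(IJ)$ is $T(I\cup\{w\})$. The paper's closed formula $u_{II'}=\sum_{\pi\in\Pi_{T(I\setminus I')}}\prod\kappa_B$ tacitly identifies these admissible partitions with \emph{all} of $\Pi_{T(I\setminus I')}$; when $T(I\setminus I')$ passes through the node $w$ (as can happen already for a star), that identification over-counts, whereas your implicit definition is immune to this. The final conclusion is unaffected, since what is needed is only that the transition matrices be unitriangular, and your bijection between non-isolating $\pi$'s and pairs $(\tau,\sigma)$ of admissible side-partitions supplies that directly.
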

\begin{proof}
Let $P_e=[p_{\alpha\beta}]$ be the matrix induced by a split $A_1|A_2$.
We will show that ${\rm rank }(P_{e})={\rm rank}(D_{e})$ where
$D_{e}=[d_{IJ}]$ is a block diagonal matrix with $1$ as the first
$1\times1$ block (i.e. $d_{\emptyset\emptyset}=1$, $d_{\emptyset
J}=0$, $d_{I \emptyset}=0$ for all $I\subseteq A_{1}$, $J\subseteq
A_{2}$) and the matrix $N_e$ as the second block. It will then follow
that ${\rm rank}(P_e)=2$ if and only if ${\rm rank}(N_e)=1$.

First note that the flattening matrix $P_e$ can be transformed to the
flattening of the non-central moments just by adding rows and columns
according to (\ref{eq:lambda_in_p}) and then to the flattening of the
central moments $M_e=[\mu_{IJ}]$ such that $I\subseteq A_1$,
$J\subseteq A_2$ using (\ref{eq:central}). It therefore suffices to
show that ${\rm rank }(M_{e})={\rm rank}(D_{e})$.

Let $I\subseteq A_1$, $J\subseteq A_2$. Then for each $\pi\in\Pi
_{T({IJ})}$ there is at most one block containing elements from both
$I$ and $J$. For if this were not so then removing $e$ would increase
the number of blocks in $\pi$ by more than one which is not possible.
Denote this block by $(I'J')$ where $I'\subseteq I$, $J'\subseteq J$.
Note that by construction we have either both $I',J'$ are empty sets if
$\pi\geq A_1|A_2$ in $\Pi_{T(IJ)}$ or both $I',J'\neq\emptyset$
otherwise. We can rewrite (\ref{eq:muinkappa}) as
%
\begin{equation}\label{eq:equat_pom1}
\mu_{IJ}=\sum_{\pi\in\Pi_{{T}(IJ)}}\left(\kappa_{I'J'}\prod_{I\supseteq
B\in\pi}\kappa_B \prod_{J\supseteq B\in\pi}\kappa_B\right).
\end{equation}
We have $d_{I'J'}=\kappa_{I'J'}$ and it can be further rewritten as
\[
\mu_{IJ}=\sum_{I'\subseteq I}\sum_{J'\subseteq J}u_{I I'} d_{I'J'} v_{J'J}
\]
where $u_{I I'}=\sum_{\pi\in\Pi_{T}(I\setminus I')}\prod_{B\in\pi
}\kappa_{B}$ and $v_{J'J}=\sum_{\pi\in\Pi_{T}(J\setminus J')}\prod
_{B\in\pi}\kappa_{B}$. Setting $u_{I I'}=0$ for $I'\nsubseteq I$,
$v_{J'J}=0$ for $J'\nsubseteq J$ we can write these coefficients in
terms of a lower triangular matrix $U$ and an upper triangular matrix
$V$. Since by construction $u_{I I}=1$ for all $I\subseteq A_1$ and
$v_{JJ}=1$ for all $J\subseteq A_2$ we have $\det U=\det V=1$.
Therefore, $M_{e}$ has the same rank as $D_{e}$.
\end{proof}

The proposition shows that the vanishing of all $3\times3$ minors of
all the edge flattenings of $P$ and the trivial invariant $\sum p_\alpha
=1$ are together equivalent to the vanishing all $2\times2$ minors of
all edge flattenings of $\kappa=(\kappa_I)_{I\in[n]_{\geq2}}$. An
immediate corollary follows which gives the equations in (C1) in
Theorem (\ref{th:parameters0}).
\begin{cor}\label{prop:equations}
Let $T=(V,E)$ be a trivalent tree. Then the smallest algebraic variety
containing $\mathcal{M}_T^{\kappa}$ is defined by the following set of
equations. For each split $A|B$ induced by an edge consider any four
(not necessarily disjoint) nonempty sets $I_1,I_2\subseteq A$,
$J_1,J_2\subseteq B$ and the induced equation $\kappa_{I_1 J_1}\kappa
_{I_2 J_2}-\kappa_{I_1 J_2}\kappa_{I_2 J_1}=0$.
\end{cor}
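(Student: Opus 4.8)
The plan is to pull the Allman--Rhodes description of the smallest algebraic variety containing $\cM_{T}$ (Theorem~\ref{th:minors}) through the coordinate change $f_{p\kappa}$ and then read off the resulting equations with the help of Proposition~\ref{prop:flat}.

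First I would note that the composite $f_{p\kappa}=f_{\mu\kappa}\circ f_{\lambda\mu}\circ f_{p\lambda}$, restricted to the hyperplane $H=\{P:\sum_{\a}p_{\a}=1\}$, is a polynomial isomorphism with polynomial inverse onto the affine subspace $\cL=\{\kappa_{\emptyset}=1,\ \kappa_{1}=\cdots=\kappa_{n}=0\}$ of the ambient space of $\cK_{T}$; this is exactly what is assembled in Appendix~\ref{app:change}. Being a polynomial isomorphism of affine spaces, $f_{p\kappa}$ is a homeomorphism for the (real) Zariski topology, and since $H$ and $\cL$ are themselves algebraic varieties, it takes the smallest algebraic variety containing a set $S\subseteq H$ to the smallest algebraic variety containing $f_{p\kappa}(S)$. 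Applying this to $S=\cM_{T}$ and writing $W$ for the smallest algebraic variety containing $\cM_{T}$, the corollary reduces to computing $f_{p\kappa}(W)$.

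Next I would describe $W$ and its image set-theoretically. By Theorem~\ref{th:minors}, inside $H$ the variety $W$ is the set of $2\times\cdots\times2$ tables $P$ with $\rank(P_{e})\le 2$ for every edge $e\in E$ (vanishing of all $3\times3$ minors of $P_{e}$ is the same as $\rank(P_{e})\le 2$). Fix such a $P$ and set $K=f_{p\kappa}(P)$. Proposition~\ref{prop:flat}, and more precisely the identity $\rank(P_{e})=1+\rank(N_{e})$ that its proof establishes via the block matrix $D_{e}$, shows that $\rank(P_{e})\le 2$ if and only if $\rank(N_{e})\le 1$, i.e. if and only if every $2\times2$ minor of the $\kappa$-flattening $N_{e}$ of $K$ vanishes. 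Running over all edges, $f_{p\kappa}(W)$ is precisely the locus in $\cL$ on which every $2\times2$ minor of every edge flattening $N_{e}$ vanishes, so this set is the smallest algebraic variety containing $\cM_{T}^{\kappa}$.

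It then remains only to identify these minors with the quadrics in the statement. For the split $A|B$ induced by $e$, the rows and columns of $N_{e}$ are indexed by the nonempty subsets $I\subseteq A$ and $J\subseteq B$, with $(I,J)$-entry $\kappa_{IJ}$; hence the $2\times2$ minor on rows $I_{1},I_{2}$ and columns $J_{1},J_{2}$ equals $\kappa_{I_{1}J_{1}}\kappa_{I_{2}J_{2}}-\kappa_{I_{1}J_{2}}\kappa_{I_{2}J_{1}}$, and letting $I_{1},I_{2}$ range over all nonempty subsets of $A$ and $J_{1},J_{2}$ over all nonempty subsets of $B$ (repeated choices merely producing the zero polynomial) gives exactly the listed generators for that split. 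I do not expect a genuine obstacle: the real content sits in Theorem~\ref{th:minors} and Proposition~\ref{prop:flat}. The one point that needs a little care is the first step --- recognising $f_{p\kappa}$ not merely as a polynomial map on the simplex but as an honest isomorphism of the relevant affine spaces, so that ``the smallest algebraic variety containing'' may legitimately be pushed through it --- which is precisely why one works on $H$ and its image $\cL$, where Appendix~\ref{app:change} supplies the polynomial inverse.
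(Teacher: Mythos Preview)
Your proposal is correct and follows essentially the same route the paper takes: the corollary is presented there as an immediate consequence of Theorem~\ref{th:minors} together with Proposition~\ref{prop:flat}, using the sentence just before the corollary to translate the rank conditions. Your write-up simply makes explicit the step the paper leaves implicit, namely that $f_{p\kappa}$ extends to a polynomial isomorphism between the affine hyperplane $H=\{\sum_{\a}p_{\a}=1\}$ and its image, so Zariski closures are carried to Zariski closures; and you correctly extract from the proof of Proposition~\ref{prop:flat} the stronger identity $\rank(P_{e})=1+\rank(N_{e})$ needed to pass from $\rank(P_{e})\le 2$ to $\rank(N_{e})\le 1$, since the proposition as stated only addresses the equality case.
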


In \cite{eriksson2007uip} Eriksson noted that some of the invariants
usually prove to be better in discriminating between different tree
topologies than the others. His simulations showed that the invariants
related to the four-point condition were especially powerful. The
binary case we consider in this paper can give some partial
understanding of why this might be so. Here, the invariants related to
the four-point condition are the only ones which involve second order
moments (c.f. Section \ref{sec:metrics}). Moreover, the estimates of
the higher-order moments (or cumulants) are sensitive to outliers and
their variance generally grows with the order of the moment. Let $\hat
{\mu}$ be a sample estimator of the central moments $\mu$ and let $f$
be one of the polynomials in Theorem \ref{prop:equations} but expressed
in terms of the central moments. Then using the delta method we have
\[
{\rm Var}(f(\hat{\mu}))\simeq\nabla f(\mu)^t {\rm Var}(\hat{\mu})
\nabla f(\mu).
\]
Consequently, in this loose sense at least, the higher the order of the
central moments (or equivalently the higher the order of the tree
cumulants) the higher the variability of we might expect the invariant
to exhibit (see \cite[Section 4.5]{mccullagh1987tms}).

}


\end{document}